\DeclareMathOperator{\Int}{int}
\newcommand{\tnij}{\, \sqcap \,}
\newcommand{\R}{\mathbb{R}}
\newcommand{\mm}{\mathrm{m}}
\DeclareMathOperator{\inter}{int}
\DeclareMathOperator{\bd}{bd}
\newcommand{\edges}{E}
\newcommand{\edgesp}{E^+}
\newcommand{\nbhd}{N}
\renewcommand{\vartheta}{\Box}
\newtheorem{claim}{Claim}
\newtheorem{prop}[claim]{Proposition}
\newtheorem{theorem}[claim]{Theorem}
\newtheorem{lemma}[claim]{Lemma}
\newtheorem{definition}[claim]{Definition}
\newtheorem{corollary}[claim]{Corollary}
\newtheorem{question}[claim]{Question}
\newcommand{\Z}{\mathbb{Z}}
\newcommand{\N}{\mathbb{N}}
\DeclareMathOperator{\dom}{dom}
\DeclareMathOperator{\im}{im}
\DeclareMathOperator{\supp}{supp}
\author{Tomasz Cie\'{s}la}
\author{Marcin Sabok}
\thanks{This research was partially supported by the NSERC
  through the \textit{Discovery Grant} RGPIN-2015-03738, by the FRQNT
  (Fonds de recherche du Qu\'{e}bec) grant
  \textit{Nouveaux
    chercheurs} 2018-NC-205427 and
  by the NCN (Polish National Science Centre) through the
  grants \textit{Harmonia} no. 2015/18/M/ST1/00050 and 2018/30/M/ST1/00668}
\address{Department of Mathematics and Statistics, McGill
  University, 805, Sherbrooke Street West Montreal, Quebec,
  Canada H3A 2K6}
\address{Institute of Mathematics, Polish
  Academy of Sciences, \'Sniadeckich 8, 00-655 Warszawa,
  Poland} 
\email{marcin.sabok@mcgill.ca}
\address{Department of Mathematics and Statistics, McGill
  University, 805, Sherbrooke Street West Montreal, Quebec,
  Canada H3A 2K6}
\email{tomasz.ciesla@mail.mcgill.ca}
\title{Measurable Hall's theorem for actions of abelian groups}
\begin{document}
\maketitle
\begin{abstract}
  We prove a measurable version of the Hall marriage theorem
  for actions of finitely generated abelian groups. In particular, it implies
  that for free measure-preserving actions of such groups
  and measurable sets which are suitably equidistributed with respect to the
  action, if they are
  are equidecomposable, then they are equidecomposable
  using measurable pieces. The latter generalizes a recent
  result of Grabowski, M\'ath\'e and Pikhurko on the measurable
  circle squaring and confirms a special case of a conjecture
  of Gardner.
\end{abstract}

\section{Introduction}
\label{sec:introduction}

In 1925 Tarski famously asked if the unit square and the
disk of the same area are equidecomposable by isometries of
the plane, i.e. if one can partition one of them into
finitely many pieces, rearrange them by isometries and
obtain the second one. This problem became known as the
Tarski circle squaring problem.

The question whether two sets of the same measure can be
partitioned into congruent pieces has a long history. At the
beginning of the 19th century Wallace, Bolyai and Gerwien
showed that any two polygons in the plane of the same area are
congruent by dissections (see \cite[Theorem 3.2]{wagon}) and
Tarski \cite{tarski.rownowaznosc} (\cite[Theorem
3.9]{wagon}) showed that such polygons are equidecomposable
using pieces which are polygons themselves. Hilbert's 3rd
problem asked if any two polyhedra of the same volume are
equidecomposable using polyhedral pieces. The latter was
solved by Dehn (see \cite{dehn}). Banach and Tarski showed
that in dimension at least 3, any two bounded sets in $\R^n$
with nonempty interior, are equidecomposable, which leads to
the famous Banach--Tarski paradox on doubling the
ball. Back in dimension $2$, the situation is somewhat
different, as any two measurable subsets equidecomposable by
isometries must have the same measure (see \cite{wagon}) and
this was one of the motivation for the Tarski circle
squaring problem.  Using isometries was also essential as
von Neumann \cite{vn} showed that the answer is positive if
one allows arbitrary area-preserving transformations.  The
crucial feature that makes the isometries of the plane
special is the fact that the group of isometries of $\R^2$
is amenable. Amenability was, in fact, introduced by von
Neumann in the search of a combinatorial explanation of the
Banach--Tarski paradox.

The first partial result on the Tarski circle squaring was a
negative result of Dubins, Hirsch and Karush \cite{dhk} who
showed that pieces of such decompositions cannot have smooth
boundary (which means that this cannot be performed using
scissors). However, the full positive answer was given by
Laczkovich in his deep paper \cite{laczkovich}. In fact, in
\cite{laczkovich.general} Laczkovich proved a stronger
result saying that whenever $A$ and $B$ are two bounded
measurable subsets of $\R^n$ of positive measure such that
the upper box
dimension of the boundaries of $A$ and $B$ is less than
$n$, then $A$ and $B$ are equidecomposable. The assumption
on the boundary is essential since Laczkovich
\cite{laczkovich.boundary} (see also
\cite{laczkovich.jordan}) found examples of two measurable
sets of the same area which are not equidecomposable even
though their boundaries have even the same Hausdorff
dimension. The proof of Laczkovich, however, did not provide
any regularity conditions on the pieces used in the
decompositions. Given the assumption that $A$ and $B$ have
the same measure, it was natural to ask if the pieces can be
chosen to be measurable. Moreover, the proof of Laczkovich
used the axiom of choice.

A major breakthrough was achieved recently by Grabowski,
M\'ath\'e and Pikhurko \cite{gmp} who showed that the pieces
in Laczkovich's theorem can be chosen to be measurable:
whenever $A$ and $B$ are two bounded subsets of $\R^n$ of
positive measure such that the upper box dimension of the
boundaries of $A$ and $B$ are less than $n$, then $A$ and
$B$ are equidecomposable using measurable pieces. Another
breakthrough came even more recently when Marks and Unger
\cite{mu} showed that for Borel sets, the pieces in the
decomposition can be even chosen to be Borel, and their
proof did not use the axiom of choice.

The goal of the present paper is to give a combinatorial
explanation of these phenomena. There are some limitations
on how far this can go because already in Laczkovich's
theorem there is a restriction on the boundary of the sets
$A$ and $B$. Therefore, we are going to work in the
measure-theoretic context and provide sufficient and
necessary conditions for two sets to be equidecomposable
almost everywhere. Recently, there has been a lot of effort
to develop methods of the measurable and Borel
combinatorics (see for instance the upcoming monograph by
Marks and Kechris \cite{marks.kechris}) and we would like to
work within this framework.

The classical Hall marriage theorem provides sufficient and
necessary conditions for a bipartite graph to have a perfect
matching. Matchings are closely connected with the existence
of equidecompositions and both have been studied in this
context. In 1996 Miller \cite[Problem 15.10]{miller} asked
whether there exists a Borel version of the Hall
theorem. The question posed in such generality has a
negative answer as there are examples of Borel graphs which
admit perfect matchings but do not admit measurable perfect
matchings. One example is provided already by the
Banach-Tarski paradox (see \cite{marks.kechris}) and Laczkovich
\cite{laczkovich.matchings} constructed a closed graph which
admits a perfect matching but does not have a measurable
one. In the Baire category setting, Marks and Unger
\cite{marks.unger} proved that if a bipartite Borel graph
satisfies a stronger version of Hall's condition with an
additional $\varepsilon>0$, i.e. if the set of neighbours of
a finite set $F$ is bounded from below by
$(1+\varepsilon)|F|$, then the graph admits a perfect
matching with the Baire property (see also \cite{marks.det}
and \cite{conley.miller} for related results on matchings in
this context). On the other hand, in all the results of
Laczkovich \cite{laczkovich.general}, Grabowski, M\'ath\'e
and Pikhurko \cite{gmp} and Marks and Unger \cite{mu} on the
circle squaring, a crucial role is played by the strong
discrepancy estimates, with an $\varepsilon>0$ such that the
discrepancies of both sets are bounded by
$C\frac{1}{n^{1+\varepsilon}}$ (for definitions see Section
\ref{sec:discrepancies}).  Recall that given a finitely
generated group $\Gamma$ generated by a symmetric set $S$
and acting freely on a space $X$, the \textit{Schreier
  graph} of the action is the graph connecting two points
$x$ and $y$ if $\gamma\cdot x=y$ for one of the generators
$\gamma\in S$.

\begin{definition}
  Suppose $\Gamma\curvearrowright (X,\mu)$ is a free pmp
  action of a finitely generated group on a space $X$. Write
  $G$ for the Schreier graph of the action. A pair of sets
  $A,B$ satisfies the \textit{Hall condition} ($\mu$-a.e.)
  with respect to $\Gamma$ (given a set of generators) 
  if for every ($\mu$-a.e.) $x\in X$ and for every finite subset $F$ of
  $\Gamma\cdot x$ we have
  $$|F\cap A|\leq|N_G(F)\cap B|,\quad |F\cap
  B|\leq|N_G(F)\cap A|,$$
  where $N_G(F)$ means the neighborhood of $F$ in the graph
  $G$.
\end{definition}

This definition clearly depends on the choice of generators,
and we say that $A,B$ satisfy the \textit{Hall
  condition} ($\mu$-a.e.) if the above holds for some choice
of generators. For the case with a fixed set of generators
(which will be more natural for us),
we say that the action $\Gamma\curvearrowright (X,\mu)$
satisfies \textit{$k$-Hall condition} ($\mu$-a.e.) if for
every ($\mu$-a.e., resp.)  $x\in X$ for every finite subset
$F$ of $\Gamma\cdot x$ we have
$$|F\cap A|\leq|N^k_G(F)\cap B|,\quad |F\cap B|\leq|N^k_G(F)\cap A|,$$ where
$N^k_G(F)$ denotes the $k$-neighborhood of $F$ in the
graph $G$. Note that $A,B$ satisfy the Hall condition if and
only if $A,B$ satisfy the $k$-Hall condition for some
$k>0$. 
We
will work under the assumption that both sets $A,B$ satisfy
certain form of
equidistribution on the orbits, namely that they are \textit{$\Gamma$-uniform} (for definition see Section
\ref{sec:discrepancies}).

Our main result is the following. 

\begin{theorem}\label{hall}
  Let $\Gamma$ be a finitely generated abelian group and
  $\Gamma\curvearrowright (X,\mu)$ be a free pmp action. Suppose
  $A,B\subseteq X$ are two measurable $\Gamma$-uniform
  sets. The following are equivalent:
  \begin{enumerate}
  \item the pair $A,B$ satisfies the Hall condition with respect
    to $\Gamma$ $\mu$-a.e.,
  \item $A$ and $B$ are $\Gamma$-equidecomposable
    $\mu$-a.e. using $\mu$-measurable sets,
  \item $A$ and $B$ are $\Gamma$-equidecomposable $\mu$-a.e.
  \end{enumerate}
\end{theorem}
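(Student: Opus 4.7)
The implication (2) $\Rightarrow$ (3) is immediate, since a measurable equidecomposition is a fortiori an equidecomposition. For (3) $\Rightarrow$ (1), an equidecomposition, after removing a null set, yields a bijection $\phi : A \to B$ such that $\phi(x) \in \{\gamma_1 \cdot x, \ldots, \gamma_n \cdot x\}$ for a fixed finite list of group elements. Setting $k = \max_i |\gamma_i|_S$ in word length, one gets $\phi(F \cap A) \subseteq \mathrm{ball}_k(F) \cap B$ for every finite $F$ contained in an orbit, and symmetrically for $\phi^{-1}$; thus the $k$-Hall condition, and hence the Hall condition, holds $\mu$-almost everywhere.

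The substantive direction is (1) $\Rightarrow$ (2). The plan is first to reduce to the case $\Gamma = \Z^d$ via the structure theorem for finitely generated abelian groups, writing $\Gamma \cong \Z^d \oplus F$ with $F$ finite; the finite factor merely multiplies each $\Z^d$-orbit by a finite fiber and scales both sides of any Hall-type inequality by $|F|$. After this reduction, equidistribution yields (via the discrepancy estimates of Section~\ref{sec:discrepancies}) a sublinear bound $\bigl| |C \cap A| - |C \cap B| \bigr| = o(|C|)$ for cubes $C \subseteq \Z^d$ of growing size, along the orbit of $\mu$-a.e. point.

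The core construction is a multi-scale measurable matching. Using a measurable toast, that is, a nested measurable tiling of $X$ by $\Z^d$-cubes of increasing side length $r_n \to \infty$ obtained by a Rokhlin-type lemma for free pmp $\Z^d$-actions, I would solve the finite matching problem inside each tile $T$: the $k$-Hall condition together with the discrepancy estimate guarantees that $A \cap T$ and $B \cap T$ have sizes differing by $o(|T|)$, so the classical finite Hall theorem applied to the bipartite graph of $k$-ball edges produces a partial matching whose defect is of sublinear size and concentrated near the boundary layer of $T$. These residual defects are then pushed to the next larger scale and reabsorbed by readjusting the matching on bigger tiles. Since the boundary-to-volume ratio of cubes in $\Z^d$ decays polynomially and the discrepancies decay as well, by choosing the sequence $r_n$ to grow fast enough one can arrange for the total defect across scales to be summable, so the construction converges to a measurable bijection $A \to B$ defined $\mu$-almost everywhere.

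The principal obstacle is controlling the propagation of defects through the multi-scale construction while preserving measurability and not destroying matchings built at earlier scales. I anticipate formulating the matching inside each tile as an integer flow on a bipartite network and handling defects by augmenting flows at successive scales, in the spirit of the Marks--Unger approach to measurable circle squaring; the delicate point is to ensure that the augmenting flows themselves depend measurably on the point and that the cumulative flow corrections are summable in $L^1$. This is where the equidistribution hypothesis does the essential work, quantifying the rate at which per-tile discrepancies decay relative to the tile growth, and allowing the iterative scheme to close.
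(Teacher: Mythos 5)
Your treatment of (2)$\Rightarrow$(3) and (3)$\Rightarrow$(1) matches the paper and is fine, as is the reduction of (1)$\Rightarrow$(2) to $\Z^d$ modulo a finite group (the paper does this by quotienting by $\Delta$ and replacing the pair $A,B$ with the bounded integer-valued function $f(x')=|A\cap\pi^{-1}(x')|-|B\cap\pi^{-1}(x')|$). The core of your argument, however, has a genuine gap. An equidecomposition is not merely a measurable bijection $A\to B$ defined a.e.: it must use finitely many group elements, i.e.\ it must have uniformly bounded displacement in the word metric. A multi-scale scheme that leaves a defect in each tile and ``pushes residual defects to the next larger scale'' moves those defect points by distances comparable to the side length of the scale at which they are finally absorbed, which is unbounded; summability of the \emph{number} of defect points does not repair this. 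Moreover, the quantitative input you invoke is exactly the one the theorem is designed to avoid: equidistribution gives only $D([0,n]^d\cdot x,A)\le c/n$, so the per-tile imbalance $\bigl||A\cap T|-|B\cap T|\bigr|$ is of order $r^{d-1}$ --- the \emph{same} order as the boundary of the tile, not smaller. The absorption arguments of Laczkovich, Grabowski--M\'ath\'e--Pikhurko and Marks--Unger all require the stronger decay $c/n^{1+\varepsilon}$ precisely so that defects are $o(\text{boundary})$ and can be cancelled scale by scale; your sketch does not explain how to close the iteration without that $\varepsilon$.

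The paper circumvents this by never iterating defects at all. From the $k$-Hall condition it builds, for each $\delta$, a Borel Hall matching on the interiors of the tiles of a Conley--Jackson--Kerr--Marks--Seward--Tucker-Drob tiling; each such matching induces a $(\chi_A-\chi_B)$-flow of absolute value at most $d^k$ that is exact off a set of measure $O(\delta)$. The key new step (Proposition~\ref{real}) is to average this \emph{sequence} of uniformly bounded approximate flows with a universally measurable Mokobodzki medial mean, producing an exact, bounded, measurable real-valued flow a.e.\ --- no defect propagation occurs, and boundedness is preserved because the mean is positive and normalized. This bounded real flow is then rounded to a bounded integer flow by purely local combinatorial corrections on a nested family of cubes (Lemmas~\ref{comblemma} and~\ref{nested}, Proposition~\ref{integerflow}), and only at the very end is equidistribution used: one chooses a Gao--Jackson tiling by cubes of side $n$ with $n$ large enough that estimate~(\ref{est}) guarantees each tile contains enough points of $A$ and of $B$ to realize the integer flow across tile boundaries by moves of bounded length. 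If you want to salvage your outline, the missing ideas are (a) a mechanism for converting the Hall condition into a \emph{bounded} exact flow without defect iteration, and (b) the observation that a bounded integer flow plus equidistribution yields a bounded-displacement bijection; as written, your construction produces at best a measurable bijection with unbounded displacement, which is not an equidecomposition.
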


As a consequence, it gives the following.

\begin{corollary}\label{wniosek}
  Suppose $\Gamma$ is a finitely generated  abelian
  group and $\Gamma\curvearrowright(X,\mu)$ is a free pmp Borel
  action on a standard Borel probability space. Let
  $A,B\subseteq X$ be measurable $\Gamma$-uniform
  sets. If $A$ and $B$ are $\Gamma$-equidecomposable, then
  $A$ and $B$ are $\Gamma$-equidecomposable using measurable
  pieces.
\end{corollary}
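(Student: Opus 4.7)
The plan is to deduce Corollary~\ref{wniosek} essentially directly from Theorem~\ref{hall}. The hypothesis of the corollary, that $A$ and $B$ are classically $\Gamma$-equidecomposable, trivially implies condition~(3) of the theorem ($\mu$-a.e.\ $\Gamma$-equidecomposability), so once I verify the remaining hypothesis that $\mu(A) = \mu(B) > 0$, the theorem immediately furnishes a $\mu$-a.e.\ equidecomposition by measurable pieces, which I then need to promote to a genuine one.

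The one nontrivial thing to check is that $\mu(A) = \mu(B)$. Finitely generated abelian groups are amenable, and for a free pmp action of an amenable group any two classically equidecomposable measurable sets must have equal measure: a $\Gamma$-invariant finitely additive measure on $\mathcal{P}(X)$ extending $\mu$ exists (by amenability, combined with Tarski's theorem on paradoxical decompositions, see \cite{wagon}), and any such extension assigns equal values to equidecomposable sets. If this common measure is zero, then the given classical equidecomposition already uses measurable pieces in the completion of $\mu$, since every subset of a null set is measurable; so we are done. Otherwise $\mu(A) = \mu(B) > 0$, all hypotheses of Theorem~\ref{hall} hold, and condition~(2) yields measurable pieces $A_1, \ldots, A_n \subseteq A$ and group elements $\gamma_1, \ldots, \gamma_n \in \Gamma$ such that $A' := \bigsqcup_i A_i$ and $B' := \bigsqcup_i \gamma_i A_i$ are conull in $A$ and $B$ respectively.

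To pass from the $\mu$-a.e.\ equidecomposition to a genuine one, the remainders $A \setminus A'$ and $B \setminus B'$ are measurable null sets, and a standard Banach--Schr\"oder--Bernstein argument in the completion of $\mu$ (where subsets of null sets are automatically measurable) combines the given classical equidecomposition of $A$ with $B$ with the measurable equidecomposition of $A'$ with $B'$ into a measurable equidecomposition of all of $A$ with all of $B$. The main substantive step is the application of Theorem~\ref{hall}; the rest is routine bookkeeping, the only small subtlety being the null-set manipulation in the final upgrade, which is handled by the completeness of $\mu$.
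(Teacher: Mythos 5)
Your overall strategy --- feed the classical equidecomposition into Theorem \ref{hall} as condition (3), extract condition (2), and then upgrade the almost-everywhere measurable equidecomposition to a genuine one --- is the same as the paper's, and your preliminary points are fine (equality of measures via amenability, and the trivial case $\mu(A)=\mu(B)=0$, which the paper in fact glosses over). The gap is in the final upgrade. You propose to handle the null remainders $A\setminus A'$ and $B\setminus B'$ by ``a standard Banach--Schr\"oder--Bernstein argument,'' but Banach--Schr\"oder--Bernstein requires piecewise-translation injections with measurable pieces of $A$ into $B$ and of $B$ into $A$, and your data supply neither. The measurable equidecomposition only injects the conull subset $A'$ into $B$; to extend it to all of $A$ you would have to send the null set $A\setminus A'$ into $B\setminus B'$ by piecewise translations, and there is no a priori reason these two null sets are equidecomposable with (subsets of) one another --- one could even be empty while the other is not. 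The classical equidecomposition does give a bijection $A\to B$, but its pieces are non-measurable, so it cannot serve as one of the two injections; and naively combining the two maps destroys injectivity, since the image of $A\setminus A'$ under the classical map may meet $B'$. Completeness of $\mu$ makes subsets of null sets measurable, but it does not make two null sets equidecomposable, which is what your argument tacitly needs.

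The missing idea is invariance, and it is exactly what the paper's Lemmas \ref{pierwszy} and \ref{drugi} supply. The conull set $Y$ carrying the measurable equidecomposition can be replaced by the $\Gamma$-invariant conull set $\bigcap_{\gamma\in\Gamma}\gamma Y$ (and the proof of Theorem \ref{hall} in fact already produces an invariant one). Once the conull set $X'$ is invariant, each piece $A_i$ of the classical equidecomposition satisfies $\gamma_i\bigl(A_i\cap(X\setminus X')\bigr)=B_i\cap(X\setminus X')$, so the classical equidecomposition restricts to an equidecomposition of $A\setminus A'$ with $B\setminus B'$ whose pieces are null, hence $\mu$-measurable; gluing this to the measurable equidecomposition of $A'$ with $B'$ finishes the proof with no Schr\"oder--Bernstein step at all. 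As written, your proof omits this restriction step, and the appeal to completeness alone does not replace it.
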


This generalizes the recent measurable circle squaring
result \cite{gmp} as Laczkovich \cite{laczkovich}
constructs an action of $\Z^d$ satisfying the conditions
above, for a suitably chosen $d$ (big enough, depending on
the box dimensions of the boundaries).

In fact, in 1991 Gardner \cite[Conjecture 6]{gardner}
conjectured that whenever two bounded measurable sets in an
Euclidean space are equidecomposable via an action of an
amenable group of isometries, then they are equidecomposable
using measurable sets. The above corollary confirms this
conjecture in case of an abelian group $\Gamma$ and
$\Gamma$-uniform sets.

\medskip

The main new idea in this paper is an application of
measurable medial means. These have previously used in
descriptive set theory only in the work of Jackson, Kechris
and Louveau \cite{jkl} on amenable equivalence relations but that
context was combinatorially different.\footnote{As we have
  learnt recently, a similar idea can be also found in
  \cite{lacz.meas,wehrung}.} They are used together with a
recent result of Conley, Jackson, Kerr, Marks, Seward and
Tucker-Drob \cite{6authors} on tilings of amenable group
actions in averaging sequences of measurable matchings. This
allows us to avoid using Laczkovich's discrepancy estimates
that play a crucial role in both proofs of the measurable
and Borel circle squaring. We also employ the idea of Marks
and Unger in constructing bounded measurable flows. More
precisely, following Marks and Unger we construct bounded
integer-valued measurable flows from bounded real-valued
measurable flows. However, instead of using Tim\'ar's result
\cite{timar} for specific graphs induced by actions of
$\Z^d$, we give a self-contained simple proof of the latter
result, which works in the measurable setting for the
natural Cayley graph of $\Z^d$. This is the only part of the
paper which deals with abelian groups and we hope it could
be generalized to a more general setting. On the other hand,
the measurable averaging operators that we employ, cannot be
made Borel and for this reason the results of this paper
apply to the measurable setting and generalize only the
results of \cite{gmp}.

\medskip


While this paper deals with abelian groups (the crucial and
only place which works under these assumptions is Section
\ref{sec:flows}), a positive answer to the following
question would confirm Gardner's conjecture \cite[Conjecture
6]{gardner}. \footnote{This has been recently answered in the
    negative by Kun \cite{gabor}, and the recent preprint
    \cite{bks} proves some optimal results the positive direction.}

\begin{question}
  Is the measurable version of Hall's theorem true for free
  pmp actions of finitely generated amenable
  groups? 
\end{question}

\medskip
\noindent\textbf{Acknowledgements} We are grateful to Oleg Pikhurko for useful
comments on an early version of the manuscript. We also
thank the anonymous Referee for many helpful remarks.

\section{Discrepancy estimates}
\label{sec:discrepancies}

Both proofs of Grabowski, M\'ath\'e and Pikhurko and of Marks
and Unger use a technique that appears in Laczkovich's paper
\cite{laczkovich} and is based on discrepancy
estimates. Laczkovich constructs an action of a group of the
form $\Z^d$ for $d$ depending on the upper box dimension of the
boundaries of the sets $A$ and $B$ such that both sets are
very well equidistributed on orbits on this action. To be
more precise, given an action $\Z^d\curvearrowright (X,\mu)$
and a measurable set $A\subseteq X$, the
\textit{discrepancy} of $A$ with respect to a finite subset
$F$ of an orbit of the action is defined
as $$D(F,A)=
\left|\frac{|A\cap F|}{|F|}-\mu(A)\right|.$$

It is meaningful to compute the discrepancy with respect to
finite cubes, i.e. subsets of orbits which are of the form
$[0,n]^d\cdot x$, where $x\in X$ and $[0,n]^d\subseteq\Z^d$
is the $d$-dimensional cube with side $\{0,\ldots,n\}$. The
cube $[0,n]^d$ has boundary, whose relative size with
respect to the size of the cube is bounded by $c\cdot\frac{1}{n}$ for a
constant $c$. 

A crucial estimation that appears in Laczkovich's paper is
that the action of $\Z^d$ is such that for both sets $A$ and
$B$ the discrepancy is estimated as
\begin{equation}D([0,n]^d\cdot x,A),D([0,n]^d\cdot x,B)\leq
c\frac{1}{n^{1+\varepsilon}},\label{esstimates}\tag{$*$}
\end{equation}
for some $\varepsilon>0$ and some $c>0$, which means that the discrepancies
of both sets on cubes decay noticeably faster than the sizes
of the boundaries of these cubes. 

A slightly more natural condition on the equidistrubition of
a set $A$ would be to remove the
$\varepsilon$ from (\ref{esstimates}) and require that
there exists a constant $c$ such that for every $n$ the
discrepancy
\begin{equation}
D([0,n]^d\times\Delta\cdot x,A)\leq c\frac{1}{n}\label{essstimates}\tag{$**$}
\end{equation}
for $\mu$-a.e. $x$.  In fact, as shown in \cite[Theorem
1.2]{laczkovich.uniform}, the condition (\ref{esstimates})
implies that (\ref{essstimates}) is satisfied on every union
of cubes in places of $[0,n]^d$. Sets satisfying the latter
condition are called \textit{uniformly spread} (see
\cite[Theorem 1.1]{laczkovich.uniform}). However, in
\cite[Theorem 1.5]{laczkovich.uniform}, Laczkovich gives
examples of sets which satisfy (\ref{essstimates}) but are not
uniformly spread.

In this paper, we work with even weaker assumption on equidistribution, given by
the following definition.

\begin{definition}
  Given a Borel free pmp action $\Gamma\curvearrowright
  (X,\mu)$ of a finitely generated
  abelian group 
  $\Gamma=\Z^d\times\Delta$ with $\Delta$
  finite, and a measurable set $A\subseteq X$, we say that
  $A$ is \textit{$\Gamma$-uniform} if there exists a constant
  $c>0$ such that for $\mu$-a.e. $X$ we have $$|A\cap (F\cdot x)|\geq c
  |F|$$ whenever $F$ is a set of the form $F=[0,n]^d\times\Delta$.
\end{definition}

Note that this definition does not depend (up to changing
the constant $c$) on the way the group is
written as $\Z^d\times\Delta$ and a choice of generators for
the group. 


\section{Measurable averaging operators}
\label{sec:absoluteness}

In this paper we use special kinds of measurable averaging
operators. These can be constructed in different ways.

For the first construction, recall the definition of a medial mean.
\begin{definition}
  A \textit{medial mean} is a linear functional
  $\mm:\ell_\infty\to\R$ which is positive, i.e.
  $\mm(f)\geq 0$ if $f\geq 0$, normalized,
  i.e. $\mm(1_\N)=1$ and shift invariant,
  i.e. $\mm(Sf)=\mm(f)$ where $Sf(n)=f(n+1)$.
\end{definition}

Medial means were studied already by Banach who showed their
existence (the so-called \textit{Banach limits}). In this
paper we use a special kind of medial means $\mm$ which is
additionally measurable on $[0,1]^\mathbb{N}$ and we use it to take a measurable
average of a sequence of functions $f_n:(X,\mu)\to[0,1]$ for a
space $(X,\mu)$ with a Borel probability measure $\mu$.
It takes a bit more effort to construct medial means that
are measurable but this can be done in a couyple of ways.

Recall that Mokobodzki showed that under the assumption of
the Continuum Hypothesis there exists a medial mean which is
universally measurable as a function on $[0,1]^\N$. For a
proof the reader can consult the textbook of Fremlin
\cite[Theorem 538S]{fremlin} or the article
\cite{meyer}. However, a careful analysis of Mokobodzki's
proof shows that for a single Borel probability measure
$\mu$, the existence of a $\mu$-measurable medial mean does
not require the Continuum Hypothesis. Nevertheless, some
set-theoretical assumptions are still used even for a single
$\mu$, such as the Hahn--Banach theorem.

Another construction of measurable averaging a sequence of
measurable functions $f_n:X\to[0,1]$ (perhaps more familiar
to the general mathematical audience than the Mokobodzki
construction) can be done using the Banach--Saks theorem in
$L^2(X,\mu)$ by using weak${}^*$-compactness of the unit
ball (cf. \cite{lacz.meas}). This also needs some weak form
of the Axiom of Choice such as the Tichonov theorem.

\section{Set-theoretical assumptions}
\label{sec:set-theor-assumpt}

In view of the foundational questions and the role of the
Axiom of Choice in equidecompositions (e.g. recall
that the Hahn--Banach theorem implies the Banach--Tarski
paradox \cite{pawlikowski}), we argue below that for the measurable equidecompositions
on co-null sets, we can remove any set theoretic assumptions
beyond ZF and the Axiom of Dependent Choice (DC)
that are needed to obtain a measurable medial mean.

Recall that Borel sets can be coded using a
$\mathbf{\Pi}^1_1$ set (of Borel codes)
$\mathrm{BC}\subseteq 2^\N$ in a $\mathbf{\Delta}^1_1$ way,
i.e. there exists a subset $C\subseteq\mathrm{BC}\times X$
such that the family $\{ C_x: x\in\mathrm{BC}\}$ consists of
all Borel subsets of $X$ and the set $C$ can be defined
using both $\mathbf{\Sigma}^1_1$ and $\mathbf{\Pi}^1_1$
definitions. For details the reader can consult the textbook
of Jech \cite[Chapter 25]{jech}.

Given a Borel probability measure $\mu$ on $X$ and a subset
$P\subseteq X\times Y$, we write $\forall^\mu\ x P(x,y)$ to
denote that $\mu(\{x\in X: P(x,y)\})=1$. It is well known
\cite[Chapter 29E]{kechris} that if $P$ is
$\mathbf{\Sigma}^1_1$, then
$\{y\in Y: \forall^\mu x\ P(x,y)\}$ is
$\mathbf{\Sigma}^1_1$.

The proposition below implies that if two Borel sets are
coded using a real $r$, then we can argue about their
measurable equidecomposition $a.e.$ in $L[r]$ (where AC and
CH hold).

\begin{prop}
  Let $V\subseteq W$ be models of \textup{ZF+DC}. Suppose in
  $V$ we have a standard Borel space $X$ with a Borel
  probability measure $\mu$, two Borel subsets
  $A,B\subseteq X$ and
  $\Gamma\curvearrowright (X,\mu)$ is a Borel pmp action of a
  countable group $\Gamma$. The statement that the sets $A$ and $B$ are
  $\Gamma$-equidecomposable $\mu$-a.e. using
  $\mu$-measurable pieces is absolute between $V$ and $W$.
\end{prop}
\begin{proof}

  Suppose that in $W$ or $V$ the sets $A$ and $B$ are
  $\Gamma$-equidecomposable $\mu$-a.e. Then there exist
  disjoint Borel subsets $A_1,\ldots A_n$ of $A$ and
  disjoint Borel subsets $B_1,\ldots,B_n$ of $B$ such that
  $\mu(A\setminus\bigcup_{i=1}^n A_i)=0$,
  $\mu(B\setminus\bigcup_{i=1}^n B_i)=0$ and
  $\gamma_i A_i=B_i$ for some
  $\gamma_1,\ldots,\gamma_n\in \Gamma$.  This statement can
  be written as
  \begin{align*}
    \exists x_1,\ldots,x_n\ 
    \bigwedge_{i\leq n}\mbox{BC}(x_i)\wedge \bigwedge_{i\not=j} C_{x_i}\cap
    C_{x_j}=\emptyset\\ \wedge\ \forall^{\mu} x\  (x\in
    A\leftrightarrow \bigvee_{i=1}^n x\in C_{x_i}) 
    \wedge\ \forall^{\mu} x\  (x\in
    B\leftrightarrow \bigvee_{i=1}^n x\in \gamma_i C_{x_i})
  \end{align*}
  and thus is it $\mathbf{\Sigma}^1_2$. By Shoenfield's
  absoluteness theorem \cite[Theorem 25.20]{jech}, it is
  absolute between $V$ and $W$.
\end{proof}



\section{Measurable flows in actions of amenable groups}
\label{sec:meas-flows-acti}

Given a standard Borel space $X$, a Borel graph $G$
on $X$ and $f:X\to\R$, a function $\varphi:G\to \R$ is an
\textit{$f$-flow} if $\varphi(x,y)=-\varphi(y,x)$ for every
$(x,y)\in G$ and $f(x)=\sum_{(x,y)\in G}\varphi(x,y)$ for
every $x\in X$. 

Let $\Gamma$ be a finitely generated
amenable group. Let $\gamma_1, \ldots, \gamma_d$ be a finite
symmetric set of generators of $\Gamma$. Let $X$ be a standard
Borel space and let $\mu$ be a Borel probability measure on
$X$. Let $\Gamma \curvearrowright (X,\mu)$ be a free pmp
action. Recall that by the \textit{Schreier graph} of the action we mean
the graph
$\{(x,\gamma_ix) : x \in X, 1 \le i \le d \} \subseteq X
\times X$.

\begin{definition}
For finite sets $F,K \subseteq \Gamma$ and $\delta>0$ we say that
$F$ is \textit{$(K,\delta)$-invariant} if
$|KF \triangle F| < \delta|F|$.
\end{definition}


In the following lemma we assume that there exists a
universally measurable medial mean $\mm$, which, by the
remarks in the previous section, we can assume throughout
this paper.

In order to make it a bit more general, let us define the
Hall condition for functions: a function $f:X\to\Z$
satisfies the \textit{$k$-Hall condition} if for every finite set $F$
contained in an orbit of $\Gamma\curvearrowright X$ we have
that $$\sum_{x\in F: f(x)\geq0}f(x)\leq
\sum_{x\in N^k_G(F):f(x)\leq0}-f(x),\ \sum_{x\in F:
  f(x)\leq0}-f(x)\leq \sum_{x\in N^k_G(F):f(x)\geq0}f(x).$$
Note that a pair of sets $A,B$ satisfies the $k$-Hall
condition if and only if  $f=\chi_A-\chi_B$ satisfies the
$k$-Hall condition.

\begin{prop}\label{real}
   Let $\Gamma$ be a finitely generated amenable group and
   $\Gamma\curvearrowright (X,\mu)$ be a Borel free pmp
   action. Suppose $f:X\to\Z$ is a measurable
   function such that
   \begin{itemize}
   \item $|f|\leq l$
   \item $f$ satisfies the $k$-Hall condition
   \end{itemize}
   for some $k,l\in\N$. Then there
   exists a $\Gamma$-invariant measurable subset $X'\subseteq X$ of measure $1$ and
   a measurable real-valued $f$-flow $\phi$ on the Schreier graph of
   $\Gamma\curvearrowright X'$ such that $$|\phi|\leq l\cdot
   d^k,$$ where $d$ is the number of generators of $\Gamma$.
\end{prop}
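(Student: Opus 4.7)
The strategy is to combine the measurable tiling theorem of \cite{6authors} with Mokobodzki's medial mean. For each $n \ge 1$, the tiling theorem produces a measurable tiling $\mathcal{T}_n$ of a conull $\Gamma$-invariant subset of $X$ by translates of finitely many fixed shapes, each $(K, \delta_n)$-invariant for $K$ the $k$-ball of the identity in the Cayley graph of $\Gamma$, where $\sum_n \delta_n < \infty$. Let $B_n \subseteq X$ denote the set of $x$ whose $k$-ball in $G$ is not contained in its tile of $\mathcal{T}_n$; then $\mu(B_n) = O(\delta_n)$, and by Borel--Cantelli, $\mu$-a.e.\ $x$ lies in $B_n$ for only finitely many $n$. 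The plan is to build, for each $n$, a measurable edge function $\phi_n : G \to \R$ that is an honest $f$-flow outside $B_n$ and is bounded by $l\cdot d^k$ everywhere, and then average these edgewise using $\mm$.

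\textbf{Finite flows on tiles.} On each finite tile $T \in \mathcal{T}_n$, consider the bipartite graph $H_T$ whose left (resp.\ right) vertices are $f(x)$ copies of each $x\in T$ with $f(x)>0$ (resp.\ $|f(y)|$ copies of each $y\in T$ with $f(y)<0$) and whose edges join copies of $x,y$ whenever $d_G(x,y) \le k$. The $k$-Hall hypothesis on $f$ directly supplies Hall's condition in $H_T$, while the F{\o}lner property ensures the imbalance $\bigl|\sum_T f\bigr|\le l\cdot\delta_n|T|$ is small enough to be absorbed by dropping a few source/sink units near the boundary of $T$. The finite Hall marriage theorem then produces a matching pairing every source unit whose base vertex is a $k$-interior vertex of $T$ with a sink unit in $T$. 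Routing each matched pair along a canonically chosen shortest $G$-path yields an edge function $\phi_n^T$ satisfying $\sum_{(x,y)\in G}\phi_n^T(x,y)=f(x)$ for every $x\in T \setminus B_n$, with $|\phi_n^T| \le l\cdot d^k$, since every route through a fixed edge originates from a source lying in a single $k$-ball of size at most $d^k$, each carrying weight at most $l$. Gluing the $\phi_n^T$ across tiles via a Borel-uniform choice---available since there are only finitely many tile shapes---produces a measurable $\phi_n\colon G\to\R$.

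\textbf{Averaging and obstacle.} Fix a universally measurable medial mean $\mm$ (cf.\ Section~\ref{sec:absoluteness}) and put $\phi(x,y) := \mm\bigl((\phi_n(x,y))_n\bigr)$. Universal measurability of $\mm$ makes $\phi$ measurable; linearity together with antisymmetry of each $\phi_n$ gives $\phi(x,y) = -\phi(y,x)$; positivity combined with $|\phi_n|\le l\cdot d^k$ gives $|\phi|\le l\cdot d^k$. For the flow equation at $\mu$-a.e.\ $x$: by the Borel--Cantelli statement above, $x \in B_n$ for only finitely many $n$, so the real sequence $\bigl(\sum_{(x,y)\in G}\phi_n(x,y)\bigr)_n$ is eventually constantly $f(x)$; since any normalized, shift-invariant, positive linear functional returns the eventual value of an eventually constant sequence, $\sum_{(x,y)\in G}\phi(x,y) = f(x)$ on a conull set, and we take $X'$ to be its $\Gamma$-invariant hull. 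The main technical hurdle is the tile construction: the sharp edge bound $l\cdot d^k$ forces one to route through \emph{shortest} $G$-paths and to absorb the unbalanced mass $\sum_T f$ into the F{\o}lner boundary, while producing a genuinely Borel-measurable global $\phi_n$ rests on the finite list of tile shapes from \cite{6authors} allowing a single canonical Borel choice of matching and shortest-path routing per shape.
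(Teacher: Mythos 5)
Your proposal follows essentially the same route as the paper: tile by $(K,\delta_n)$-invariant shapes via \cite{6authors}, run a finite Hall matching between source and sink units inside each tile (saturating only the sources whose $k$-ball stays in the tile), route matched pairs along canonical shortest paths to get flows $\phi_n$ bounded by $l\cdot d^k$, and average edgewise with a medial mean, using eventual constancy of the divergence sequence. The only cosmetic difference is that you handle $|f|\le l$ by duplicating vertices inside each tile, whereas the paper reduces once and for all to $f=\chi_A-\chi_B$ by passing to $X\times l$.

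One step is stated incorrectly and needs the paper's counting argument to repair. You claim $\phi_n$ is an honest $f$-flow outside $B_n=\{x:\nbhd_k(x)\not\subseteq\mathrm{tile}(x)\}$. That is false as written: Hall's theorem saturates every source unit based at a $k$-interior vertex, but it does \emph{not} saturate every sink unit based at a $k$-interior vertex, so at an interior $y$ with $f(y)<0$ some of whose sink units are unmatched the divergence of $\phi_n$ differs from $f(y)$, even though $y\notin B_n$. The exceptional set must be enlarged to include such $y$, and its measure must then be controlled; this is exactly the estimate $|F(K)\cap(B\setminus h(A))|\le 2|F\setminus F(K)|$ in the paper, which follows from combining the Hall-saturation of interior sources with the F{\o}lner bound $|F\setminus F(K)|<\delta|K||F|$. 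Your phrase about absorbing the imbalance ``near the boundary'' conflates location with cardinality: the unmatched sink units need not lie near the boundary, they are merely few. With the exceptional set so enlarged its measure is still $O(\delta_n)$, your Borel--Cantelli step goes through, and the rest of the argument (medial-mean averaging, the bound $l\cdot d^k$, measurability via universal measurability of $\mm$) is sound and matches the paper.
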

\begin{proof}
  First, we are going to assume that $|f|\leq 1$, i.e. that
  $f=\chi_A-\chi_B$ for two measurable subsets
  $A,B\subseteq X$.  Indeed, replace $X$ with $X\times l$
  and take the projection $\pi: X\times l\to X$. Then we can
  find two subsets $A,B\subseteq X\times l$ such that
  $f(x)=|\pi^{-1}(\{x\})\cap A|-|\pi^{-1}(\{x\})\cap B|$. We
  can also induce the graph structure on $X\times l$ by
  taking as edges all the pairs $((x,i),(y,j))$ such that
  $(x,y)$ forms an edge in $X$ as well as all pairs
  $((x,i),(x,j))$ for $i\not=j$. Then $A$ and $B$ satisfy
  the $k$-Hall condition in $X\times l$ for the above graph.

   Let $K=\{\gamma\in \Gamma \colon d(e,\gamma)\le k\}$. Fix
   $\delta>0$. Use the
   Conley--Jackson--Kerr--Marks--Seward--Tucker-Drob tiling
   theorem \cite[Theorem 3.6]{6authors}
   for $K$ and $\delta$ to get a $\mu$-conull
   $\Gamma$-invariant Borel set $X' \subseteq X$, a collection
   $\{C_{i} : 1 \le i \le m\}$ of Borel subsets of $X'$, and
   a collection $\{F_{i} : 1 \le i \le m\}$ of
   $(K,\delta)$-invariant subsets of $\Gamma$ such that
   $\mathcal{F} = \{F_{i}c : 1 \le i \le m, c \in C_{i}\}$
   partitions $X'$.

   For a finite set $F \subseteq \Gamma$ define
   $F(K) = \{f\in F : Kf \subseteq F\}$.  Note that if
   $F'x = F''y$ where $F',F''$ are finite subsets of
   $\Gamma$ and $x, y \in X$ then $F'(K)x = F''(K)y$. If
   $F\subseteq X$ is a finite subset of a single orbit, then
   we let $F(K) = F'(K)x$ where $F' \subseteq \Gamma$ and
   $x \in X$ satisfy $F = F'x$. This definition does not
   depend on the choice of representation $F = F'x$ by the
   previous remark.
Note that if
  $F \subseteq X$ is $(K,\delta)$-invariant then
  $$|F(K)| \geq |F| - |KF\triangle F| \cdot |K| > |F| \cdot (1 - \delta|K|).$$

  Write
  $$H = \{ (x,\gamma x) \in A \times B \colon x \in F_{i}(K)\cdot c
  \text{ for some } 1\le i \le m \text{ and } c \in C_{i}, \
  \gamma \in K\}.$$
  Then $H$ is a locally finite Borel graph satisfying Hall's
  condition as $A,B$ satisfy the $k$-Hall condition. By the Hall theorem, there
  exists a Borel injection

  $$h \colon A \cap \bigcup_{F \in \mathcal{F}} F(K) \to B \cap \bigcup_{F \in \mathcal{F}} F.$$
  Write $G$ for the Schreier graph of
  $\Gamma\curvearrowright X$. For every $x \in \dom h$ let
  $p_{x} = \{(x_0, x_1), (x_1, x_2), \ldots, (x_{j-1},
  x_j)\}$
  be the shortest lexicographically smallest path in the
  graph $G$ connecting $x_0=x$ with $x_j=h(x)$. Let
  $\mathcal P = \{ p_{x} \colon x \in \dom h\}$.

  Define $\phi \colon G \to \R$ by the formula
  $$\phi(x,\gamma x) = |\{p \in \mathcal P \colon
  (x,\gamma x) \in p \}| - |\{p\in\mathcal P \colon (\gamma x,x) \in p\}|.$$

  Note that $\phi$ is Borel (by definition). Also, $|\phi|$
  is bounded by $d^k$ (the number of paths of length not greater
  than $k$ passing through a given edge in the 
  graph $G$). By
  definition, $\phi$ is a
  $(\chi_{\dom h} - \chi_{\im h})$-flow.

  Define
  $$X''=\bigcup_{F \in \mathcal{F}} F(K) \setminus (B \setminus h(A)).$$
  Note that for every $x \in X''$ we have
  $\chi_A(x)-\chi_B(x) = \chi_{\dom h}(x) - \chi_{\im h}(x)$.

  For every $1\le i \le m$ let
  $\{(A_{1,i},B_{1,i},h_{1,i}), (A_{2,i},B_{2,i},h_{2,i}),
  \ldots, (A_{n_i,i},B_{n_i,i},h_{n_i,i})\}$
  be the set of all triples $(A',B',h')$ consisting of sets
  $A',B'\subseteq F_i$ and a bijection $h'\colon A' \to
  B'$.
  For $1 \le j \le n_i$ define
  $$C_{j,i} = \{c \in C_i \colon (\dom h_{j,i})c = A \cap
  (F_ic) \wedge\, \forall \gamma \in \dom h_{j,i} \
  h_{j,i}(\gamma )c=h(\gamma c)
  \}.$$
  Then $\{C_{1,i}, C_{2,i}, \ldots, C_{n_i,i}\}$ is a
  partition of $C_i$ into Borel sets.

  Observe that for every $F \in \mathcal F$ we have
  $$|h(A) \cap F(K)| \ge |F(K) \cap A| - |F \setminus
  F(K)|$$
  and
  $$|B \cap F(K)| \le |A \cap F| \le |A \cap F(K)| +
  |F \setminus F(K)|.$$ Therefore
  \begin{align*}
    |F(K) \cap (B \setminus h(A))| &= |(F(K) \cap B) \setminus
                                     (F(K) \cap h(A))| \\ &=|F(K) \cap
                                                            B| - |F(K) \cap h(A)| \\ &\le |A \cap F(K)| + |F
                                                                                       \setminus F(K)| - (|F(K) \cap A| - |F \setminus
                                                                                       F(K)|) \\ &= 2|F\setminus F(K)|.
  \end{align*}
  It follows that
  \begin{align*}
|F(K) \setminus (B
    \setminus h(A))| =|F(K)|-|F(K)\cap(B\setminus h(A))| \\ \ge
    |F(K)| - 2|F \setminus F(K)| = 3|F(K)|-2|F| >
    |F|(1-3\delta |K|).
  \end{align*}
  Therefore
  \begin{align*}
    \mu(X'') &= \mu(\bigcup_{i=1}^{m} \bigcup_{j=1}^{n_i} ( F_{i}(K)C_{j,i} \setminus (B \setminus h(A)))) = \sum_{i=1}^m \sum_{j=i}^{n_i} |F_{i}(K)\setminus(B_{j,i} \setminus A_{j,i})| \mu(C_{j,i}) \\
             &> \sum_{i=1}^m \sum_{j=i}^{n_i}
               |F_{i}|(1-3\delta|K|)\mu(C_{j,i}) = \sum_{i=1}^m
               |F_i|(1-3\delta|K|)\mu(C_i) \\ &= 1-3\delta|K|.
  \end{align*}

  Now, for every $n$ pick $\delta_n>0$ so that
  $1-3\delta_n|K|>1-\frac{1}{2^n}$. Denote $h_n=h$,
  $\phi_n=\phi$ and $X_n=X''$ where $h$, $\phi$ and $X''$
  are constructed above for this particular $\delta_n$.

  Let
  $Y=\liminf X_{n} = \bigcup_{m=1}^\infty
  \bigcap_{n=m}^\infty X_{n}$.  Then $\mu(Y)=1$. We can
  assume that $Y$ is $\Gamma$-invariant (by taking its
  subset if needed). Denote by $G$ the
  Schreier graph of $\Gamma\curvearrowright Y$. Write
  $\phi_\infty=(\phi_{n})_{n\in\N}: G\to \ell^\infty$. Define
  $$\phi(x,y)=\mm(\phi_\infty(x,y)),$$ where $\mm$ denotes
  the medial mean. Then for $x \in Y$ we have

  \begin{align*}
    \sum_{y \colon (x,y) \in G} \phi(x,y) &= \sum_{y \colon (x,y) \in G} \mm((\phi_{n}(x,y))_{n\in\N}) = \mm((\sum_{y \colon (x,y) \in G} \phi_{n}(x,y))_{n\in\N}) \\
                                          &= \mm((\chi_{\dom h_{n}}(x) - \chi_{\im h_{n}}(x))_{n\in\N})=\chi_A(x)-\chi_B(x)
  \end{align*}
  as the sequence
  $\chi_{\dom h_{n}}(x) - \chi_{\im h_{n}}(x)$ is eventually
  constant and equal to $\chi_A(x)-\chi_B(x)$.

  Therefore $\phi$ is a $(\chi_A - \chi_B)$-flow in the
  Schreier graph $G$ of $\Gamma\curvearrowright Y$. Moreover, $|\phi|$
  is bounded by $d^k$, which is a common bound for the flows
  $\phi_{n}$. For measurability of $\phi$, write
  $\mu'=\phi_*(\mu\times\mu)$ for the pushforward  to $[-d^k,d^k]^\N$ of the measure
  $\mu\times\mu$ on the graph $G$ and note that since $\mm$ is
  $\mu'$-measurable, it follows that $\phi$ is
  $\mu$-measurable.

\end{proof}

\section{Flows in $\mathbb{Z}^d$}
\label{sec:flows}

In this section we prove a couple of combinatorial lemmas
which lead to a finitary procedure of changing a 
real-valued flow on a cube in $\Z^d$ to an integer-valued
flow on a cube in $\Z^d$. This gives an alternative proof of
\cite[Lemma 5.4]{mu} in the measurable setting. Also, this is the only
part of the paper which deals with the groups $\Z^d$ as
opposed to arbitrary amenable groups.

Let
$$G = \{(x,x') \in \Z^d \times \Z^d \colon x'-x \in \{\pm e_1, \pm e_2, \ldots, \pm e_d\} \}$$
be the Cayley graph of $\Z^d$. An edge $(x,x')$ is called
\textit{positively oriented} if $x'-x=e_j$ for some $j$.

\begin{definition}
For a set $A \subseteq \Z^d$ we define:
$$\edges(A)=\{(x,x+e_j) \colon j \in \{1,2,\ldots,d\}, \ \{x,x+e_j\}\subseteq A \},$$
$$\edgesp(A)=\{(x,x+e_j) \colon j \in \{1,2,\ldots,d\}, \ \{x,x+e_j\} \cap A \neq \emptyset\},$$
$$\nbhd(A)=\{ x+y \colon \ x\in A, \ y \in \{-1,0,1\}^d\}.$$
\end{definition}
So, $\edges(A)$ is the set of positively oriented edges
whose both endpoints are in $A$, $\edgesp(A)$ is the set of
positively oriented edges whose at least one endpoint is in
$A$, and $\nbhd(A)$ is the neighbourhood of $A$ (in the
sup-norm).

\begin{definition}
We  say that a subset $C$ of $\Z^d$ is a \textit{cube} if $C$
is of the form
$$\{n_1,n_1+1,\ldots,n_1+k_1\}\times \ldots \times \{n_d,n_d+1,\ldots, n_d+k_d\}$$
for some $n_1, \ldots, n_d, k_1, \ldots, k_d \in \Z$ with
$k_1,\ldots,k_d\ge 0$.  By the \textit{upper face} of $C$ we
mean
$$\{n_1,n_1+1,\ldots,n_1+k_1\}\times \ldots \times \{n_d+k_d\}.$$
\end{definition}



\begin{definition}
For any $x_1, x_2, x_3, x_4 \in \Z^d$ which are consecutive
vertices of a unit square, and a real number $s$ we define a
$0$-flow $\vartheta^{x_1,x_2,x_3,x_4}_s$ by the following
formula:

$$\vartheta^{x_1,x_2,x_3,x_4}_s(y,z) = \begin{cases} 
s & \text{ for } (y,z)\in\{(x_1,x_2), (x_2,x_3), (x_3,x_4),
(x_4,x_1)\}, \\ -s & \text{ for } (y,z)\in\{(x_2,x_1),
(x_3,x_2), (x_4,x_3), (x_1,x_4)\}, \\ 0 & \text{ otherwise.}
\end{cases}
$$  
\end{definition}
That is, $\vartheta^{x_1,x_2,x_3,x_4}_s$ is a flow sending
$s$ units through the path $x_1\to x_2\to x_3\to x_4\to
x_1$.

Note that if $\varphi \colon G \to \R$ is an $f$-flow and
$s=\varphi(x_1,x_4)-\lfloor \varphi(x_1,x_4)\rfloor$ then
$\psi=\varphi+\vartheta_s^{x_1,x_2,x_3,x_4}$ is an $f$-flow
such that $|\varphi-\psi|<1$ and $\psi(x_1,x_4)$ is an
integer.

We will now prove a couple of lemmas stating that one can modify a flow
so that it becomes integer-valued on certain sets of edges.

\begin{lemma}\label{lem:flow1} Let $f \colon \Z^d \to
\R$. Let $\varphi \colon G \to \R$ be a bounded
$f$-flow. Let
$$C=\{n_1,n_1+1,\ldots,n_1+k_1\}\times \ldots \times \{n_{d-1},n_{d-1}+1,\ldots, n_{d-1}+k_{d-1}\}\times\{n_d,n_d+1\}$$
for some $n_1, \ldots, n_d, k_1, \ldots, k_{d-1} \in \Z$
with $k_1,\ldots,k_{d-1}\ge 0$. Then for every $1\le \ell<d$
there is an $f$-flow $\psi$ such that:
\begin{itemize}
\item $\supp(\varphi - \psi) \subseteq \edges(C)$,
\item for every $x=(x_1,\ldots,x_{d-1},n_d)\in C$ such that
$n_\ell \le x_\ell <n_\ell+k_\ell$ we have
$$\psi(x,x+e_d)\in\Z.$$
\item $|\varphi-\psi|<2$.
\end{itemize}
\end{lemma}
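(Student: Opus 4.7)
The plan is to decompose $C$ into two-dimensional slabs lying in the $(e_\ell, e_d)$-plane and, within each slab, iteratively apply the elementary $\vartheta$ operation introduced just above to sweep fractional parts of the $e_d$-edges along the $e_\ell$-direction, clearing one vertical edge at a time.

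More precisely, for each choice of the remaining coordinates $(x_j)_{j\neq\ell,\,j<d}$ with $x_j\in\{n_j,\ldots,n_j+k_j\}$, I consider the slab consisting of vertices $x^{(i,\varepsilon)}=(x_1,\ldots,n_\ell+i,\ldots,x_{d-1},n_d+\varepsilon)$ for $0\le i\le k_\ell$ and $\varepsilon\in\{0,1\}$ (where $n_\ell+i$ is inserted in coordinate $\ell$). Two distinct slabs share no edge in direction $e_\ell$ or $e_d$, so the modifications performed in different slabs do not interfere and it suffices to describe one. In a fixed slab, set $\varphi_0=\varphi$ and inductively for $i=0,\ldots,k_\ell-1$ let
\[
s_i \;=\; \varphi_i(x^{(i,0)},x^{(i,0)}+e_d)-\lfloor\varphi_i(x^{(i,0)},x^{(i,0)}+e_d)\rfloor \;\in\; [0,1),
\]
and then put $\varphi_{i+1}=\varphi_i+\vartheta_{s_i}^{x^{(i,0)},\,x^{(i+1,0)},\,x^{(i+1,1)},\,x^{(i,1)}}$. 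By the remark that follows the definition of $\vartheta$, the $i$-th step turns $\varphi_{i+1}(x^{(i,0)},x^{(i,0)}+e_d)$ into an integer; since the $i$-th square is disjoint from every $e_d$-edge at positions $<i$, any integrality already obtained is preserved; and since each $\vartheta$ is a $0$-flow, the $f$-flow property is maintained throughout. Let $\psi$ be the function obtained by running this procedure in every slab in parallel.

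The three bulleted conditions should then be easy to verify. Every edge on which $\varphi$ and $\psi$ differ lies in one of the unit squares used in the construction, whose four vertices all lie in $C$, so $\supp(\varphi-\psi)\subseteq\edges(C)$. The required integrality of $\psi(x,x+e_d)$ on the lower face, restricted to $n_\ell\le x_\ell<n_\ell+k_\ell$, is immediate from the construction. For the bound, an $e_\ell$-edge is touched by exactly one $\vartheta_{s_i}$ and so is changed by less than $1$ in absolute value, while an $e_d$-edge at $x_\ell=n_\ell+i$ with $1\le i\le k_\ell-1$ is touched only by $\vartheta_{s_{i-1}}$ (as the right side of the $(i-1)$-th square) and $\vartheta_{s_i}$ (as the left side of the $i$-th square), giving net change $s_{i-1}-s_i$ of absolute value strictly less than $1$; the extremal vertical edges are touched at most once. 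Thus in fact $|\varphi-\psi|<1$, comfortably within the stated bound of $2$.

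I do not expect any serious obstacle. The only care needed is bookkeeping: one must process squares in order of increasing $i$ so that later steps cannot re-spoil an already integral edge, and one must check that different slabs act on disjoint sets of $(e_\ell,e_d)$-edges. Both points are built into the indexing: the $i$-th square only involves $e_d$-edges at positions $i$ and $i+1$, and no two slabs share an edge in directions $e_\ell$ or $e_d$.
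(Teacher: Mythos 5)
Your proposal is correct and follows essentially the same route as the paper: sweep along the $e_\ell$-direction, adding the elementary square flow $\vartheta_s$ on the square $x,\,x+e_\ell,\,x+e_\ell+e_d,\,x+e_d$ with $s$ the fractional part of the current value on $(x,x+e_d)$, processed in increasing $x_\ell$ so that cleared vertical edges are never revisited (the paper batches all slabs at a fixed $x_\ell$ before incrementing, but since distinct slabs are edge-disjoint this yields the same $\psi$). Your sharper observation that the two contributions to an interior vertical edge are $+s_{i-1}$ and $-s_i$, giving $|\varphi-\psi|<1$ rather than the paper's cruder "each edge is touched at most twice" bound of $2$, is also correct.
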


\begin{proof} Without loss of generality we may assume that
$n_1=n_2=\ldots=n_d=0$.

For every $j\leq k_\ell$ define $C_j=\{(x_1,\ldots,x_{d-1},0)\in C
\colon x_\ell=j \}$. We will define a sequence of $f$-flows
$\varphi_0, \varphi_1, \ldots, \varphi_{k_\ell}$ such that
 $$\varphi_j(x,x+e_d)\in\Z\quad
\mbox{and}\quad \supp(\varphi-\varphi_j)\subseteq \edges(C).$$ for all $x \in \bigcup_{i<j} C_i$.

So, let $\varphi_0=\varphi$. Given $\varphi_j$ we define
$\varphi_{j+1}$ in the following way. For every $x\in C_j$
let $\vartheta_x=\vartheta_s^{x,y,z,t}$ where $y=x+e_\ell$,
$z=y+e_d$, $t=z-e_\ell=x+e_d$ and
$$s=\varphi_j(x,t)-\lfloor\varphi_j(x,t)\rfloor.$$

We define
$$\varphi_{j+1} = \varphi_j+ \sum_{x \in C_j} \vartheta_x.$$
Note that $\supp(\vartheta_x)$ for $x\in C_j$ are disjoint
from $\{(x,x+e_d) \colon x \in \bigcup_{i<j}
C_i\}$. Therefore, $\varphi_{j+1}(x,x+e_d) =
\varphi_j(x,x+e_d) \in \Z$ for $x \in \bigcup_{i<j}
C_i$. Also, the sets $\supp(\vartheta_x)$ are pairwise
disjoint for $x\in C_j$, and therefore, by definition of
$\varphi_{j+1}$ we have for $x \in C_j$
$$\varphi_{j+1}(x,x+e_d)=\varphi_j(x,x+e_d)+\vartheta_x(x,x+e_d)=\lfloor \varphi_j(x,x+e_d) \rfloor \in \Z.$$
It is also clear that $\supp(\vartheta_x) \subseteq \edges(C)$, so
$$\supp(\varphi - \varphi_{j+1}) \subseteq \supp(\varphi-\varphi_j)\cup \bigcup_{x\in C_j} \supp(\vartheta_x) \subseteq \edges(C).$$
Therefore $\varphi_{j+1}$ satisfies all required properties.

We put $\psi=\varphi_{k_\ell}$. It remains to check that
$|\varphi - \psi|<2$. This is because $\psi = \varphi +
\sum_{j=0}^{k_\ell-1} \sum_{x\in C_j} \vartheta_x$,
$|\vartheta_x|<1$ and for every edge $(y,z)$ there are at
most two $x\in\bigcup_{j<k_\ell} C_j$ for which
$\vartheta_x(y,z)\neq 0$.
\end{proof}

\begin{lemma}\label{lem:flow2} Let $f \colon \Z^d \to
\R$. Let $\varphi \colon G \to \R$ be a bounded
$f$-flow. Let
$$C=\{n_1,n_1+1,\ldots,n_1+k_1\}\times \ldots \times \{n_{d-1},n_{d-1}+1,\ldots, n_{d-1}+k_{d-1}\}\times\{n_d,n_d+1\}$$
for some $n_1, \ldots, n_d, k_1, \ldots, k_{d-1} \in \Z$
with $k_1,\ldots,k_{d-1}\ge 0$. Then there is an $f$-flow
$\psi$ such that:
\begin{itemize}
\item $\supp(\varphi-\psi)\subseteq \edges(C)$,
\item if $x=(x_1,\ldots,x_{d-1},n_d)\in
C\setminus\{(n_1+k_1,n_2+k_2,\ldots,n_{d-1}+k_{d-1},n_d)\}$,
then
$$\psi(x,x+e_d)\in \Z,$$
\item $|\varphi-\psi|<2d$.
\end{itemize}
\end{lemma}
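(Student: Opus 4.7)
The plan is to iterate Lemma \ref{lem:flow1} along a nested chain of sub-cubes, peeling off one slab from the upper face at each step. Set $C^{(0)}=C$ and for $1\le\ell\le d-1$ let
\[
C^{(\ell)} = \{n_1+k_1\}\times\cdots\times\{n_\ell+k_\ell\}\times\{n_{\ell+1},\ldots,n_{\ell+1}+k_{\ell+1}\}\times\cdots\times\{n_{d-1},\ldots,n_{d-1}+k_{d-1}\}\times\{n_d,n_d+1\}.
\]
Each $C^{(\ell-1)}$ has the form required by Lemma \ref{lem:flow1} (nonnegative extents in the first $d-1$ directions and height $2$ in direction $e_d$). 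Starting from $\psi_0=\varphi$, I would define $\psi_\ell$ for $\ell=1,\ldots,d-1$ by applying Lemma \ref{lem:flow1} to $\psi_{\ell-1}$, the cube $C^{(\ell-1)}$, and the parameter $\ell$. This produces an $f$-flow with $\supp(\psi_\ell-\psi_{\ell-1})\subseteq \edges(C^{(\ell-1)})$, $|\psi_\ell-\psi_{\ell-1}|<2$, and $\psi_\ell(x,x+e_d)\in\Z$ whenever $x$ lies in the lower face of $C^{(\ell-1)}$ with $x_\ell<n_\ell+k_\ell$.

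The key observation is that later steps cannot destroy the integer values obtained in earlier ones. Indeed, if $(x,x+e_d)$ is a vertical edge with $x_\ell<n_\ell+k_\ell$, then for every $\ell'>\ell$ every point of $C^{(\ell'-1)}$ has its $\ell$-th coordinate equal to $n_\ell+k_\ell$, so $x\notin C^{(\ell'-1)}$ and the edge $(x,x+e_d)$ lies outside $\edges(C^{(\ell'-1)})\supseteq\supp(\psi_{\ell'}-\psi_{\ell'-1})$. Consequently, for $\psi:=\psi_{d-1}$ the condition $\psi(x,x+e_d)\in\Z$ holds at every $x=(x_1,\ldots,x_{d-1},n_d)\in C$ such that $x_\ell<n_\ell+k_\ell$ for some $1\le\ell\le d-1$, i.e.\ at every point of the lower face except the corner $(n_1+k_1,\ldots,n_{d-1}+k_{d-1},n_d)$.

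Finally, $\supp(\varphi-\psi)\subseteq\bigcup_{\ell=1}^{d-1}\edges(C^{(\ell-1)})\subseteq \edges(C)$, and the triangle inequality yields $|\varphi-\psi|\le\sum_{\ell=1}^{d-1}|\psi_\ell-\psi_{\ell-1}|<2(d-1)<2d$. The only delicate point in the plan is the bookkeeping in the preceding paragraph: one must choose the chain of sub-cubes precisely so that once a vertical edge on the lower face has been made integer-valued at some step, it lies outside the region on which all subsequent modifications take place, and the nested structure of the $C^{(\ell)}$ is tailored to guarantee exactly this.
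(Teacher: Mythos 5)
Your proof is correct and is essentially the paper's own argument: the paper defines the same nested chain of sub-cubes (its $C_j$ is your $C^{(j-1)}$, after normalizing $n_i=0$), applies Lemma \ref{lem:flow1} to each with parameter $\ell=j$, and uses the same observation that earlier integrality is preserved because later sub-cubes fix the $\ell$-th coordinate at $n_\ell+k_\ell$. The bookkeeping you flag as delicate is exactly what the paper records via its sets $D_j$, and your bound $2(d-1)<2d$ matches.
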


\begin{proof} Without loss of generality we may assume that
$n_1=n_2=\ldots=n_d=0$.

Define
$$C_j = \{k_1\}\times \ldots \times \{k_{\ell-1}\}\times \{0,1,\ldots,k_\ell\}\times \ldots\times\{0,1,\ldots, k_{d-1}\}\times\{0,1\}$$
and
$$D_j=\{(x_1,\ldots, x_{d-1},0) \colon (x_1,\ldots,x_j) \neq (k_1,\ldots, k_j)\}.$$

By induction, construct $f$-flows
$\varphi_0,\varphi_1,\ldots,\varphi_{d-1}$ such that
\begin{itemize}
\item[(i)] $\supp(\varphi-\varphi_j) \subseteq \edges(C)$,
\item[(ii)] $\varphi_j(x,x+e_d) \in \Z$ for every $x\in D_j$,
\item[(iii)] $|\varphi_j-\varphi_{j-1}|<2$.
\end{itemize}

We define $\varphi_0=\varphi$. Given $\varphi_{j-1}$, we
obtain $\varphi_j$ by applying Lemma \ref{lem:flow1} for
$\varphi_{j-1}$, $f$, $\ell=j$ and $C_j$. Then $\varphi_j$
satisfies (i) as
$$\supp(\varphi-\varphi_j) \subseteq \supp(\varphi-\varphi_{j-1}) \cup \supp(\varphi_{j-1}-\varphi_j) \subseteq \edges(C) \cup \edges(C_j) = \edges(C).$$
For (ii) observe that
$$D_j=D_{j-1} \cup \{(k_1,\ldots,k_{j-1},x_j,\ldots,x_{d-1},0)\in C \colon x_j < k_j\}.$$
By Lemma \ref{lem:flow1}, $\varphi_j$ agrees with
$\varphi_{j-1}$ on $\{(x,x+e_d) \colon x \in D_{j-1}\}$,
thus $\varphi_j(x,x+e_d)\in \Z$ for $x\in
D_{j-1}$. Moreover, $\varphi_j(x,x+e_d)\in \Z$ for $x\in
D_j\setminus D_{j-1}$ again by Lemma \ref{lem:flow1}. Also (iii) is immediate by Lemma
\ref{lem:flow1}. Therefore $\varphi_j$ satisfies the required
properties.

We define $\psi = \varphi_{d-1}$. By construction, $\psi$
satisfies the first two conditions. For the third condition
note that
$$|\varphi-\psi| \le \sum_{j=1}^{d-1} |\varphi_j - \varphi_{j-1}| < 2d.$$
\end{proof}

\begin{lemma}\label{comblemma} 
Let $C$ be a cube. Let $\mathcal C$ be a
collection of cubes such that:
\begin{itemize}
\item $\nbhd(C') \subseteq C$ for every $C'\in \mathcal C$,
\item $\nbhd(C') \cap \nbhd(C'') = \emptyset$ for every distinct
$C', C''\in \mathcal C$.
\end{itemize} 
Write $$E=\edgesp(C)\setminus \bigcup \{\edges(\nbhd(C')) \colon C'\in\mathcal C\}.$$

Let $f \colon \Z^d \to \Z$. Let $\varphi
\colon G \to \mathbb R$ be a bounded $f$-flow. Then there
exists an $f$-flow $\psi \colon G \to \mathbb R$ such that:
\begin{itemize}
\item $\supp(\varphi-\psi)\subseteq \edges(\nbhd(C))$,
\item $\supp(\varphi-\psi)$ is disjoint from $\edgesp(C')$ for
every $C'\in \mathcal C$,
\item $\psi(e)$ is integer for every edge $e \in E$,
\item $|\varphi-\psi|<6d$.
\end{itemize}
\end{lemma}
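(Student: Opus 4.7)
The plan is to construct $\psi$ by iteratively applying Lemma \ref{lem:flow2} to $2$-thick slabs of $C$, one coordinate direction at a time, using the buffer region $\nbhd(\cdot)$ around $C$ and around each $C' \in \mathcal{C}$ to absorb the boundary effects.

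For each direction $e_j$ with $j = d, d-1, \ldots, 1$, I would decompose the $e_j$-edges of $C$ into two disjoint families of $2$-thick slabs in direction $e_j$ (an ``even-shifted'' family $C \cap \{x_j \in \{n_j + 2k, n_j + 2k + 1\}\}$ and an ``odd-shifted'' family $C \cap \{x_j \in \{n_j + 2k + 1, n_j + 2k + 2\}\}$), further subdivided into rectangular sub-pieces each of which avoids every protected cube $\nbhd(C')$. This subdivision is possible because the $\nbhd(C')$'s are pairwise disjoint rectangular cubes sitting inside $C$, so any slab can be cut around them into rectangular sub-slabs. I would then apply Lemma \ref{lem:flow2} (reoriented for direction $e_j$) to each such sub-piece, forcing integrality of its $e_j$-edges. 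Since Lemma \ref{lem:flow2}'s modification on a sub-piece $S$ is supported in $\edges(S) \subseteq \edges(\nbhd(C))$ and $\edges(S)$ is disjoint from each $\edgesp(C')$ by construction, both support conditions $\supp(\varphi - \psi) \subseteq \edges(\nbhd(C))$ and $\supp(\varphi - \psi) \cap \edgesp(C') = \emptyset$ are preserved by every application.

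The main obstacle is twofold. First, controlling the perturbation at $|\varphi - \psi| < 6d$: a direct direction-by-direction argument gives $O(d^2)$ perturbation, so the proof must ensure each edge is modified by at most a constant number of slab applications. The form $6d = 3 \cdot 2d$ suggests each edge should be hit by at most three applications of Lemma \ref{lem:flow2} in the whole construction, which requires coordinating the slab families across directions rather than using two passes per direction independently. Second, processing direction $e_j$ can disturb integrality previously established for edges of other directions, because the circulating flows $\vartheta^{x,y,z,t}_s$ used inside Lemma \ref{lem:flow1} live in $2$-planes involving two coordinate directions; the proof must restrict or reorder the Lemma \ref{lem:flow1} applications (for instance, using only $\ell \in \{1, \ldots, j-1\}$ when processing direction $e_j$ if we move from $j = 1$ upward, so that the circulating flows stay in the ``already-handled'' planes) and then clean up the remaining ``corner'' edges by a reduction to the $(d-1)$-dimensional case along horizontal layers of $C$.
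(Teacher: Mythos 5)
Your plan is a genuinely different route from the paper's, and as you yourself half-acknowledge, it has two unresolved gaps that are not cosmetic; the ideas needed to close them are exactly the ones the paper's proof is built around.

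First, the paper does not proceed direction-by-direction at all: it sweeps the cube layer-by-layer in the single distinguished direction $e_d$, writing $H_k=\Z^{d-1}\times\{k\}$ and alternating two moves. In the odd move, every \emph{horizontal} edge $(x,y)$ of $E$ inside the layer $H_k$ (all $d-1$ remaining directions at once) is rounded to an integer by adding a circulation $\vartheta^{x,y,z,t}_s$ in the $2$-plane spanned by that edge and $e_d$; the fractional debt is thereby pushed onto the not-yet-processed vertical edges going up to $H_{k+1}$, so nothing already fixed is disturbed. In the even move, the vertical edges $(x,x+e_d)$ leaving the layer become integer \emph{for free}: at a vertex $x$ all other incident edges are already integer and $f(x)\in\Z$, so flow conservation forces the last edge to be integer. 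This ``last edge at each vertex is integer by conservation'' mechanism is what is missing from your scheme; it is precisely what removes the need to actively round the final direction (resolving your second obstacle, which your parenthetical ordering suggestion does not — rounding $e_j$-edges perturbs the auxiliary direction's edges by a fractional amount, so whichever direction is handled last has no untouched auxiliary direction available) and what caps the number of modifications of any single edge at three of size $<2d$ (resolving your first obstacle).

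Second, your treatment of the protected cubes does not account for the leftover corner edges of Lemma \ref{lem:flow2}. Each application of that lemma to a $2$-thick slab leaves exactly one non-integer $e_d$-edge; in your subdivision into many rectangular sub-pieces you would accumulate many such edges scattered through $C$, and the proposed ``reduction to the $(d-1)$-dimensional case'' gives no mechanism to make them integer. The paper avoids this: Lemma \ref{lem:flow2} is invoked only on the slab sitting on the upper face of each $\nbhd(C_j)$ (where conservation at a vertex fails because the edges inside $\nbhd(C_j)$ are untouched and possibly non-integer), and the single leftover corner edge is then shown to be integer by a \emph{global} conservation identity: $\sum_{x\in\nbhd(C_j)}f(x)$ equals the net flow across the edge boundary of $\nbhd(C_j)$, all of whose terms except the corner edge are already integers. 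Your sub-pieces, being required to avoid $\nbhd(C')$ entirely, also never touch the edges of $E$ crossing the boundary of $\nbhd(C')$, so those edges would remain non-integer. To repair your argument you would need both the conservation trick at generic vertices and the global conservation argument around each protected cube; at that point you would essentially have reconstructed the paper's proof.
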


\begin{proof}

Without loss of generality we may assume that
$$C=\{1,2,\ldots,k_1\}\times\ldots\times\{1,2,\ldots,k_d\}$$
for some positive integers $k_1,\ldots,k_d$. Then
$$\nbhd(C)=\{0,1,\ldots,k_1+1\}\times\ldots\times\{0,1,\ldots,k_d+1\}.$$

For any $0\le k\le k_d$ let $H_k = \Z^{d-1} \times
\{k\}$. Let
$$E_{2k}=\{(x,x+e_d) \in E \colon x\in H_k \}$$
be the set of vertical edges from $E$ having their starting
point in $H_k$ and
$$E_{2k+1}=\{(x,x+e_j) \in E \colon x \in H_k, j<d\}$$
be the set of edges from $E$ having both endpoints in $H_k$.

We construct a sequence $\varphi_0, \varphi_1, \ldots,
\varphi_{2k_d}$ of $f$-flows so that
\begin{itemize}
\item $\supp(\varphi-\varphi_k)\subseteq \edges(\nbhd(C))$ for every $0
\le k \le 2k_d$,
\item $\supp(\varphi-\varphi_k)$ is disjoint from $\edgesp(C')$
for every $C'\in \mathcal C$ and $0\le k\le 2k_d$,
\item $\varphi_k(y,z)$ is integer for every $0\le k \le
2k_d$ and $(y,z) \in \bigcup_{i\le k} E_i$.
\end{itemize} In the end we put $\psi=\varphi_{2k_d}$.

\begin{figure*}[!t]\label{fig:nondiag}
	\centering
	\includegraphics[width=0.75\textwidth]{fig-3.mps}
	\caption{Construction of $\varphi_{2k+1}$.}
	\label{f:1}
\end{figure*}

To define $\varphi_0$ we use Lemma $\ref{lem:flow1}$ for
$\varphi$, $f$, $\ell=1$, and the cube
$$\{1,2,\ldots,k_1+1\}\times \{1,2,\ldots,k_2\} \times \{1,2,\ldots,k_3\} \times \ldots \times \{1,2,\ldots,k_{d-1}\} \times \{0,1\}.$$

Suppose that $\varphi_{2k}$ is defined. Now we define
$\varphi_{2k+1}$ (cf. Fig. \ref{f:1}). For every edge
$(x,y)\in E_{2k+1}$ let $z=y+e_d$, $t=x+e_d$,
$s=-\varphi_{2k}(x,y)+\lfloor \varphi_{2k}(x,y) \rfloor$ and
$\vartheta_{(x,y)}=\vartheta_s^{x,y,z,t}$.

Define $\varphi_{2k+1}=\varphi_{2k}+\sum \vartheta_{(x,y)}$
where the summation goes over all $(x,y)\in E_{2k+1}$.

Note that $\varphi_{2k+1}$ assumes integer values on all
$(x,y)\in E_{2k+1}$. Indeed, if $(x',y')\in E_{2k+1}$ is
distinct from $(x,y)$ then $\vartheta_{(x',y')}(x,y)=0$ and
so
$$\varphi_{2k+1}(x,y)=\varphi_{2k}(x,y)+\vartheta_{(x,y)}(x,y)=\lfloor \varphi_{2k}(x,y) \rfloor \in \Z.$$
Moreover, by definition, $\varphi_{2k+1}$ agrees with
$\varphi_{2k}$ on $\bigcup_{i\le 2k} E_i$. It follows that
$\varphi_{2k+1}$ is integer-valued on $\bigcup_{i\le 2k+1}
E_i$.

Since for every $(x,y)\in E_{2k+1}$ we have $\supp(\vartheta_{(x,y)})
\subseteq \edges(\nbhd(C))$ and $\supp(\vartheta_{(x,y)}) \cap \edgesp(C') =
\emptyset$ for every $C' \in \mathcal C$, and $\varphi_{2k}$
satisfies these as well by inductive hypothesis, we see that
$\varphi_{2k+1}$ also has these properties.

Thus $\varphi_{2k+1}$ is as required.

Now suppose that $\varphi_{2k+1}$ is defined. We construct
$\varphi_{2k+2}$ (cf. Fig. \ref{f:2}). Let $D=\{x
\colon (x,x+e_d)\in E_{2k+2}\}$. Note that every $x\in D$ is
either an element of $C\setminus \bigcup \{\nbhd(C')\colon C'\in
\mathcal C\}$ or lies on the upper face of some cube
$\nbhd(C')$ for $C'\in \mathcal C$. We also note that if $C'\in
\mathcal C$ then the upper face of $\nbhd(C')$ is either
contained in $D$ or disjoint from $D$. So, let $C_1, C_2,
\ldots C_n$ be all elements of $\mathcal C$ such that the
upper faces $D_1, D_2, \ldots, D_n$ of $\nbhd(C_1), \nbhd(C_2),
\ldots, \nbhd(C_n)$ are subsets of $D$.

Let $(x,x+e_d)\in E_{2k+2}$. Then either $x \in D_j$ for
some $j\leq n$ or $x \in D \setminus \bigcup_{j\le n} D_j$.

First we deal with the case $x \in D \setminus \bigcup_{j\le n} D_j$. Then $(x-e_d,x)\in E_{2k}$ and $(x,x+e_i),
(x-e_i,x)\in E_{2k+1}$ for every $1 \le i \le d-1$. By the
inductive hypothesis
$$\varphi_{2k+1}(x,x\pm e_1), \varphi_{2k+1}(x,x\pm e_2), \ldots, \varphi_{2k+1}(x,x\pm e_{d-1}), \varphi_{2k+1}(x,x-e_d)\in\Z.$$ 
Since $f(x)\in \Z$ and
$$f(x)=\sum_{i=1}^{d} \varphi_{2k+1}(x,x\pm e_i),$$
it follows that $\varphi_{2k+1}(x,x+e_d)\in \Z$.

\begin{figure*}[!t]\label{fig:nondiag}
	\centering
	\includegraphics[width=0.75\textwidth]{fig-4.mps}
	\caption{Construction of $\varphi_{2k}$.}
	\label{f:2}
\end{figure*}

Next we deal with the case $x \in D_j$ for some $j\leq
n$. Each $D_j, j\leq n$ is dealt with
separately. For every $j\leq n$ we obtain an $f$-flow
$\varphi'_j$ by applying Lemma \ref{lem:flow2} for
$\varphi_{2k+1}$, $f$ and the cube
$$D'_j=D_j \cup (D_j+e_d) = \{n'_1,\ldots,n'_1+k'_1\}\times \ldots \times \{n'_{d-1},\ldots,n'_{d-1}+k'_{d-1}\}\times \{n'_d,n'_d+1\}.$$ 
Then $\varphi'_j$ agrees with $\varphi_{2k+1}$ outside of
$\edges(D'_j)$, and $\varphi'_j$ is also integer-valued on
all edges of the form $(x,x+e_d)$ with
$x\in D_j \setminus \{x'\}$, where
$$x'=(n'_1+k'_1,
n'_2+k'_2,\ldots,n'_{d-1}+k'_{d-1},n'_d).$$
The only problematic edge is the one $(x',x'+e_d)$ 
We claim that $\varphi'_j(x',x'+e_d)$ is integer as well.

Indeed, observe that
$$\sum_{x\in \nbhd(C_j)} f(x) = \sum_{(x,y)\in E, x\in \nbhd(C_j),y \notin \nbhd(C_j)} \varphi'_j(x,y)$$
Since
$f(x)\in \Z$ for every $x$ and, by the properties of
$\varphi'_j$, we have that $\varphi'_j(x,y) \in \Z$ for all
$(x,y)\neq (x',x'+e_d)$ with $x \in \nbhd(C_j)$ and $y \notin
\nbhd(C_j)$, it follows that $\varphi'_j(x', x'+e_d) \in \Z$
as well.

We define $\varphi_{2k+2}$ by the formula
$$\varphi_{2k+2}(x,y) = \begin{cases} \varphi'_j(x,y) & \text{ if } (x,y) \in \edges(D'_j) \text{ or } (y,x) \in \edges(D'_j) \text{ for some } j, \\
\varphi_{2k+1}(x,y) & \text{ otherwise.}
\end{cases}$$ $\varphi_{2k+2}$ is well-defined because
$\edges(D_j')$ are pairwise disjoint. By definition, it is
integer-valued on $\bigcup_{i\le 2k+2} E_i$, and the
conditions on $\supp(\varphi-\varphi_{2k+2})$ are clearly
satisfied.  Thus $\varphi_{2k+2}$ is as required.

We put $\psi = \varphi_{2k_d}$. It remains to check that
$|\varphi - \psi|<6d$. This follows from the fact that the
value on every edge was modified at most three times by at
most $2d$.
\end{proof}

\section{Measurable bounded $\Z$-flows a.e.}

In this section we show how to turn a measurable bounded real-valued
flow into a measurable bounded integer-valued flow on a set
of measure 1. We only use Lemma \ref{comblemma} proved in
the previous section and the Gao--Jackson tiling theorem for
actions of $\Z^d$.

Suppose $\Z^d\curvearrowright (X,\mu)$ is a free pmp
action. We follow the notation from the previous section in
the context of the action.

\begin{definition}
  We say that a finite subset of $X$ is a \textit{cube} if
  it is of the form
  $$(\prod_{i=1}^d k_i)\cdot x=(\{0,1,\ldots,k_1\}\times \ldots \times \{0,1,\ldots, k_d\})\cdot x$$
  for some positive integers $k_1, \ldots, k_d$ and
  $x \in X$.  We refer to the numbers $k_1,\ldots,k_d$ as to
  the lengths of the sides of the cube.  A family of cubes
  $\{(\prod_{i=1}^d k_i(x))\cdot x: x\in C\}$ is
  \textit{Borel} if the set $C$ is Borel and the functions
  $k_i$ are Borel. A family of cubes $\{C_x: x\in C\}$ is a
  \textit{tiling} of $X$ if it forms a partition of $X$.
\end{definition}


\begin{definition}
  Let $\mathcal C \subseteq [X]^{<\infty}$ be a collection of
  cubes. We say that it is \textit{nested} if for every
  distinct $C, C' \in \mathcal C$:
  \begin{itemize}
  \item if $C \cap C' = \emptyset$ then
    $\nbhd(C) \cap \nbhd(C')=\emptyset$,
  \item if $C \cap C' \neq \emptyset$ then either
    $\nbhd(C) \subseteq C'$ or $\nbhd(C') \subseteq C$.
  \end{itemize}
\end{definition}

\begin{definition}
  Given a cube of the form
  $$C=\{(n_1,\ldots,n_d)\cdot x: 0\leq n_i\leq N_i\},$$ by its
  \textit{interior} we mean the cube
  $$\inter C=\{(n_1,\ldots,n_d)\cdot x: 1\leq n_i\leq
  N_i-1\}$$
  and its \textit{boundary} is $$\bd C=C\setminus \inter C.$$
\end{definition}

\begin{lemma}\label{nested}
  Suppose $\Z^d\curvearrowright (X,\mu)$ is a free pmp
  action. Then there is a sequence of familes $F_n$ of
  cubes such that each $F_n$ consists of disjoint cubes,
  $\bigcup F_n$ is nested and covers $X$ up to a set of
  measure zero.
\end{lemma}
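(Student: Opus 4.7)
The plan is to apply the Gao--Jackson tiling theorem for $\Z^d$ in its \emph{tower} form: there exist Borel tilings $T_1, T_2, \ldots$ of $X$ (after discarding a $\Z^d$-invariant null set) such that every cube of $T_n$ has all side lengths at least $N_n$, each cube of $T_{n+1}$ is a union of cubes of $T_n$, and the sequence $(N_n)$ grows fast enough that $\sum_n 1/N_n < \infty$ and $\sum_n N_n/N_{n+1} < \infty$ (e.g.\ $N_{n+1} = 2^n N_n$). For $x\in X$, write $D^{(n)}(x)$ for the unique cube of $T_n$ containing $x$; by the refining property this depends only on the $T_n$-class of $x$.

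Given this, I would define
$$F_n = \big\{\, \inter D^{(n)}(x) \,:\, x\in X \text{ and } D^{(k)}(x)\subseteq \inter D^{(k+1)}(x) \text{ for every } k\ge n\,\big\}.$$
Each $F_n$ is Borel, and its members are pairwise disjoint since they are the interiors of distinct $T_n$-cubes. To check that $\bigcup_n F_n$ is nested, take $C_1=\inter D_1$, $C_2=\inter D_2$ with $D_i\in T_{n_i}$ and $n_1\le n_2$. If $D_1\cap D_2=\emptyset$ then the inclusions $\nbhd(C_i)\subseteq D_i$ yield $\nbhd(C_1)\cap\nbhd(C_2)=\emptyset$. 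If $D_1\cap D_2\neq\emptyset$ with $n_1<n_2$, the refining property forces $D_1\subseteq D_2$; letting $D^{(k)}$ denote the $T_k$-cube containing $D_1$ for $k\ge n_1$, the condition defining $F_{n_1}$ yields
$$D_1=D^{(n_1)}\subseteq \inter D^{(n_1+1)}\subseteq D^{(n_1+1)}\subseteq \inter D^{(n_1+2)}\subseteq\cdots\subseteq \inter D^{(n_2)}=C_2,$$
so $\nbhd(C_1)\subseteq D_1\subseteq C_2$, as required; the remaining case $n_1=n_2$ with $C_1=C_2$ is trivial.

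For coverage, the measure of $\{x: x\in \bd D^{(n)}(x)\}$ is at most $c/N_n$, and the measure of $\{x: D^{(n)}(x)\not\subseteq \inter D^{(n+1)}(x)\}$ is at most $cN_n/N_{n+1}$, for some dimensional constant $c=c(d)$; both are summable by our choice of $(N_n)$. Borel--Cantelli then gives, for $\mu$-a.e.\ $x$, an index $n_0$ with $x\in \inter D^{(n_0)}(x)$ and $D^{(k)}(x)\subseteq \inter D^{(k+1)}(x)$ for all $k\ge n_0$, which places $x$ in a member of $F_{n_0}$. The main point requiring care is ensuring that Gao--Jackson produces a refining Borel tower with prescribed lower bounds on side lengths; this is a standard feature of the $\Z^d$ tilings used throughout the hyperfiniteness literature and can also be extracted from the tiling theorem of \cite{6authors} invoked earlier.
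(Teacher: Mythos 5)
Your argument is clean \emph{given} its input, but the input is where the real difficulty of this lemma lives, and you have assumed it away. You posit a \emph{refining} (layered) tower of Borel cube tilings $T_1,T_2,\ldots$ in which every cube of $T_{n+1}$ is a disjoint union of cubes of $T_n$, with controlled side lengths at every level. The Gao--Jackson theorem does not provide this: it gives, for each $n$ separately, a Borel tiling by cubes with sides $n$ or $n+1$ (this is exactly how the paper invokes it), and the tilings for different $n$ bear no containment relation to one another. Nor can a refining cube tower be ``extracted'' from the tiling theorem of \cite{6authors}, whose tiles are translates of $(K,\delta)$-invariant F{\o}lner sets, not cubes, again with no coherence across invariance levels. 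Producing a layered tower of genuine cubes is a substantially stronger statement: even the purely finitary step --- partitioning a single large cube into cubes whose sides lie in a prescribed window --- is a nontrivial combinatorial constraint in dimension $d\ge 2$, and the Borel/measurable coherence across infinitely many levels is precisely the kind of structure (a ``layered toast'') whose existence is a separate, delicate question in this literature. Note also that your coverage estimate $\mu\{x: D^{(n)}(x)\not\subseteq \inter D^{(n+1)}(x)\}\lesssim N_n/N_{n+1}$ tacitly needs an upper bound on the sides of $T_n$-cubes as well as the lower bound $N_{n+1}$ on those of $T_{n+1}$, so you would need two-sided control of side lengths in addition to refinement.

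The paper's proof is designed exactly to avoid needing refinement. It takes the independent Gao--Jackson tilings $S_n$ (cubes of side $n^3$ or $n^3+1$) and manufactures the nested structure by intersecting iterated interiors: $S_n^1=\Int S_n$ and $S_n^k=S_n^{k-1}\tnij \Int^k S_{n+k}$, using that a nonempty intersection of cubes is again a cube; the family $F_n=\liminf_m S_n^m$ then consists of the pieces that stabilize, and the measure loss at each stage is controlled by the ratio (boundary layer of depth $k$)/(side $\approx (n+k)^3$), which is summable. Your $\liminf$ selection and Borel--Cantelli coverage estimate are the right instincts and closely parallel this, but to make your version work you must either prove the existence of the refining cube tower you invoke (a genuinely new construction, not a citation) or replace it with the intersection device. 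As written, the proof has a gap at its foundation.
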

\begin{proof}

  If $S$ and $T$ are families of sets, define
  $$S \tnij T = \{C \cap C' \colon C \in S, \ C' \in T,\ C
  \cap C' \neq \emptyset\}.$$
  Note that
  $\bigcup (S \tnij T) = \left(\bigcup S\right) \cap
  \left(\bigcup T\right)$.
  Also note that if $S$ and $T$ are families of cubes then
  $S \tnij T$ is a family of cubes as well. We also write
  $\Int S = \{ \Int C \colon C \in S\}$ and $\Int^k$ for the
  $k$-th iterate of $\Int$.

  Use the Gao--Jackson theorem \cite{gao.jackson} to obtain
  a sequence of partitions $S_1, S_2, \ldots$ of $X$ so that
  $S_n$ consists of cubes with sides $n^3$ or
  $n^3+1$. Define $S_n^1= \Int S_n$ and
  $S_n^k = S_n^{k-1} \tnij \Int^k S_{n+k}$ for $k>1$. Note
  that each $S_n^k$ consists of pairwise disjoint cubes.

  Define
  $$F_n = \liminf_m S_n^m = \{C \colon \exists m_0 \forall m\ge m_0 \ C \in S_n^m \}.$$ 
  Note that if $C \in F_n$ then there exist unique cubes
  $C_n \in S_n, C_{n+1} \in S_{n+1}, \ldots$ such that
  $C = \bigcap_{k\ge 0} \Int^{k+1} C_{n+k}$. Also note that
  $\bigcup F_n = \bigcap_{k=0}^\infty \bigcup \Int^{k+1}
  S_{n+k}$.

  We claim that $F=\bigcup_n F_n$ is nested and covers a set
  of measure $1$.

  For nestedness, consider cubes $C, C' \in F$. Then
  $C \in F_n$, $C' \in F_m$ for some $n,m$. We may assume
  that $m\ge n$. Write
  $C = \bigcap_{k\ge 0} \Int^{k+1} C_{n+k}$ and
  $C' = \bigcap_{k\ge 0} \Int^{k+1} C_{m+k}$ with
  $C_k, C_k' \in S_k$.

  If $m=n$ and $C_k=C_k'$ for all $k\ge m$ then $C=C'$.

  If $m>n$ and $C_k=C_k'$ for all $k\ge m$ then
  $$C \subseteq \bigcap_{k\ge m} \Int^{k-n+1} C_k = \bigcap_{k\ge m} \Int^{k-n+1} C_k' \subseteq \bigcap_{k\ge m} \Int^{k-m+2} C_k,$$ so
  $\nbhd(C) \subseteq \bigcap_{k\ge m} \Int^{k-m+1} C_k =
  C'$.

  If $C_k \neq C_k'$ for some $k\ge m$ then
  $C_k \cap C_k' = \emptyset$. Note that
  $C \subseteq \Int^{k-n+1} C_k \subseteq \inter C_k$ so
  $\nbhd(C) \subseteq C_k$. Similarly,
  $\nbhd(C') \subseteq C_k'$. Since $C_k, C_k' \in S_k$ are
  disjoint, $C$ and $C'$ are disjoint.

  This shows that $F$ is nested.

  We will prove now that $\mu(\bigcup F) = 1$.

  For a cube $C$ let $x_C$ to be the point $x \in X$ such
  that $C=\left(\prod_{i=1}^d [0,n_i]\right) \cdot x_C$. For
  a positive integer $n$ write
  $X_n = \{x_C \colon C \in S_n\}$. Note that for any
  $0\le k<n$
  $$\mu\left(\bigcup \Int^k S_n\right) \ge (n^3-2k)^d \mu(X_n) \ge \frac{(n^3-2k)^d}{(n^3+1)^d} = \left(1-\frac{2k+1}{n^3+1}\right)^d \ge 1-d\cdot \frac{2k+1}{n^3+1}.$$

  Since
  $\bigcup F_n = \bigcap_{k=0}^\infty \bigcup \Int^{k+1}
  S_{n+k}$,
  we have
  $$\mu\left(X\setminus \bigcup F_n\right) \le
  \sum_{k=0}^\infty \mu\left(X \setminus \bigcup \Int^{k+1}
    S_{n+k}\right) \le d \cdot \sum_{k=0}^\infty
  \frac{2k+3}{(n+k)^3+1} \le d \cdot \sum_{k=n}^\infty
  \frac{3}{k^2}.$$ This implies
  that 
  $$\mu\left(X\setminus \bigcup F\right) = \lim_{n\to\infty}\mu\left(X\setminus \bigcup F_n\right) = 0.$$
  Hence $\mu\left(\bigcup F\right)=1$.

\end{proof}

Marks and Unger \cite[Lemma 5.4]{mu} showed that for every $d\geq 2$, any Borel,
bounded real-valued flow on the Schreier graph of a free
Borel action of $\Z^d$ can be modified to a bounded Borel
integer-valued flow. Below we provide a short proof for the
case $d=1$ and additionally an independent proof (based on
Lemma \ref{comblemma}) for
$d\geq2$ in the case of a pmp action where we consider flows
defined a.e.

\begin{prop}\label{integerflow}
   Suppose $\Z^d\curvearrowright (X,\mu)$ is a free pmp
   action and $G$ is its Schreier graph. Let $f:X\to\Z$ be a bounded measurable function. For every
   measurable $f$-flow $\varphi:G\to\R$, there
   exists a measurable bounded $\psi:G\to\Z$ such that:
   \begin{itemize}
   \item $\psi$ is an $f$-flow $\mu$-a.e.,
   \item $|\psi|\leq|\varphi| +12d$.
   \end{itemize}
\end{prop}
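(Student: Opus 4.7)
My plan is to handle the cases $d=1$ and $d\geq 2$ separately.

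For $d=1$ each $\Z$-orbit carries the Schreier graph of a line, so an $f$-flow satisfies $\varphi(n,n+1)=\varphi(m,m+1)+\sum_{k=m+1}^{n}f(k)$; since $f$ is integer-valued, integrality propagates from any single edge to the entire orbit. I would apply Lemma \ref{nested} to the $\Z$-action (equivalently, use the standard hyperfiniteness of $\Z$-orbits) to measurably designate a reference edge on a conull subset of $X$, set $\psi$ there to $\lfloor\varphi\rfloor$, and propagate via $f$. The resulting integer-valued $f$-flow $\psi$ satisfies $|\psi-\varphi|<1$, well inside the bound $12d$.

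For $d\geq 2$ the work is done by iterating Lemma \ref{comblemma} over the nested tiling of Lemma \ref{nested}. Let $F=\bigcup_n F_n$ be the nested family of cubes covering $X$ up to $\mu$-measure zero. Nestedness equips $F$ with a forest structure in which the parent of $C$ is the smallest $C^{*}\in F$ with $\nbhd(C)\subseteq C^{*}$; since $\nbhd(C)\subseteq C^{*}$ forces $|C|<|C^{*}|$, every cube has finite \textit{height}, the length of the longest descending chain in its subtree, and this height is a Borel function on $F$. I would then construct measurable $f$-flows $\varphi_0=\varphi,\varphi_1,\varphi_2,\ldots$ by processing cubes in order of increasing height: at step $k$, for every $C$ of height $k$ apply Lemma \ref{comblemma} with $\mathcal{C}$ equal to the set of immediate children of $C$. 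By nestedness, distinct height-$k$ cubes have pairwise disjoint $1$-neighbourhoods, so these applications compose into a single measurable $f$-flow $\varphi_k$; the support and disjointness clauses of Lemma \ref{comblemma} preserve the integer values established at previous steps (on $\edgesp(C')$ for children $C'$) while making new edges in $R(C):=\edgesp(C)\setminus\bigcup_{C'\text{ child}}\edges(\nbhd(C'))$ integer.

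The main combinatorial observation driving the bound is that each edge is modified at most twice. For $\mu$-a.e.\ edge $e=(x,y)$, let $C_e\in F$ be the smallest cube whose $1$-neighbourhood contains both endpoints; such a cube exists because the innermost cube of $F$ containing $x$ automatically has $y$ in its $1$-neighbourhood. Then $e$ can be modified only at the step processing $C_e$ and the step processing its parent $P$: for any higher ancestor $A$, the cube $C_e$ is a descendant of some child $C''$ of $A$ with $\{x,y\}\subseteq\nbhd(C_e)\subseteq C''$, so $e\in\edgesp(C'')$ and the support clause of Lemma \ref{comblemma} prevents further modification at step $A$. Since each single modification has magnitude $<6d$, the cumulative change is $|\psi-\varphi|<12d$.

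The limit $\psi=\lim_k\varphi_k$ exists pointwise (each edge stabilizes after finitely many steps), is Borel by construction, and is an $f$-flow $\mu$-a.e. The step I expect to be the most delicate is verifying that $\psi$ is integer-valued on a conull set of edges: minimality of $C_e$ in the chain $\{C\in F:e\in\edgesp(C)\}$ places the typical edge in $R(C_e)$, so Lemma \ref{comblemma} at the step processing $C_e$ forces $\psi(e)\in\Z$; the residual case of edges sitting in the outer shell $\nbhd(C)\setminus C$ of some cube requires extra care with the precise combinatorics of the nested family, and can be handled either by an integer rounding on the resulting $\mu$-null edge set or by a minor refinement of the processing order.
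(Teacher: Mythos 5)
Your strategy is essentially the paper's: for $d\ge 2$, iterate Lemma \ref{comblemma} over the nested family from Lemma \ref{nested}, processing inner cubes before outer ones, and observe that each edge is modified at most twice (at the minimal cube whose neighbourhood contains it and at its parent, all higher ancestors being blocked by the $\edgesp$-disjointness clause), which gives the bound $12d$. The $d=1$ case is also correct, though the paper does it more simply by putting $\psi(e)=\lfloor\varphi(e)\rfloor$ on \emph{every} edge: since each vertex has degree $2$ and $f$ is integer-valued, the two fractional parts at a vertex agree automatically, so no measurable choice of reference edge is needed.

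The genuine gap is in your last paragraph. The residual edges --- those with both endpoints in a shell $\nbhd(C')\setminus C'$ --- are never placed in the integer-forcing set $E$ of any application of Lemma \ref{comblemma}: when $C'$ itself is processed they lie outside $\edgesp(C')$, and when any ancestor is processed they are excluded precisely because they lie in $\edges(\nbhd(C'))$. No refinement of the processing order changes this. More importantly, this residual set is \emph{not} $\mu$-null: the cubes of $F_n$ have sides of order $n^3$, so the vertices lying on these shells for $C'\in F_n$ carry measure of order $n^{-3}\,\mu\left(\bigcup F_n\right)>0$. And even on a null set one cannot simply ``round'', since changing a single edge value destroys the divergence identity $f(x)=\sum_y\varphi(x,y)$ at both endpoints. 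The paper closes this with a genuine additional step: the problematic edges split into the pairwise disjoint \emph{finite} subgraphs induced on the boundaries $\bd C$ of the cubes; after passing to the limit, every edge leaving such a subgraph already carries an integer value, so the net demand at each of its vertices is an integer met by the real flow, and the integral flow theorem for finite graphs produces an integer-valued correction there without increasing the bound. You need to supply this step (or an equivalent one) to finish the proof.
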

\begin{proof}
  First we deal with the case $d=1$. In that case for every
  $e\in G$ we simply put
  $\psi(e)=\lfloor\varphi(e)\rfloor$. Note that since $G$ is
  a graph of degree $2$, for every
  $x\in X$, the fractional parts of the two edges which
  contain $x$ are equal because $f$ is integer-valued. Thus,
  $\psi$ is also an $f$-flow.

  Now suppose $d\geq2$. By Lemma \ref{nested}, there exists
  an invariant subset
  $X'\subseteq X$ of measure $1$ and a sequence of families $F_n$ of
  cubes such that $\bigcup_{n\in\N} F_n$ is nested, each $F_n$ consists of disjoint cubes,
  $\bigcup_{n\in\N} F_n$ covers $X'$. By induction on
  $n$ we construct measurable $f$-flows $\varphi_n$ such that
  $\varphi_0=\varphi$ and
  \begin{itemize}
  \item $\supp(\varphi_{n+1}-\varphi_n)\subseteq\bigcup\{\edges(\nbhd(C)): C\in F_n\}$,
  \item $\varphi_{m}=\varphi_{n+1}$ for every $m>n$ on every $\edgesp(C)$
    for $C\in F_n$,
  \item $|\varphi_n|\leq|\varphi|+12d$.
  \end{itemize}
  Given the flow $\varphi_n$ we apply Lemma \ref{comblemma} on each cube $C\in
  F_n$ to obtain the flow $\varphi_{n+1}$. The bound on
  $\varphi_n$ follows from the fact that the value of the
  flow on each edge
  is changed at most twice by at most $6d$ along this construction.

  The sequence $\varphi_n$ converges pointwise on the
  edges of $X'$ to a
  measurable $f$-flow
  $\varphi_\infty$, which is integer-valued on all edges in
  $X'$ except possibly for the edges in $\bd C $ for cubes $C\in\bigcup_n
  F_n$. However, the family $\{\bd C: C\in\bigcup_n
  F_n\}$ consists of pairwise disjoint finite sets. By the
  integral flow theorem for finite graphs, we can further
  correct $\varphi_\infty$ on each of these finite subgraphs
  without changing the bound $|\varphi|+12d$
  to obtain a measurable integer-valued $f$-flow $\psi$, which is equal to $\varphi_\infty$ on
  all edges from $G\setminus \bigcup\{\edges(\bd C): C\in\bigcup_{n\in\N}
  F_n\}$.
  \end{proof}

\section{Hall's theorem}
\label{sec:halls-threorem}

In this section we prove Theorem \ref{hall}. The proof of
(1)$\Rightarrow$(2) is based on an idea of Marks and Unger \cite{mu}.

\begin{proof}[Proof of Theorem \ref{hall}]
  (2)$\Rightarrow$(3) is obvious.

  (3)$\Rightarrow$(1) is true for every finitely generated
  group $\Gamma$. In general, if $A$ and $B$ are
  $\Gamma$-equidecomposable, and the group elements used in
  the decomposition are $\gamma_1,\ldots,\gamma_n$, then $A$ and $B$
  satisfy the $k$-Hall condition for $k$ greater than the
  word lengths of the group elements $\gamma_1,\ldots,\gamma_n$. If
  $X'\subseteq X$ is a set of measure $1$ such that
  $A\cap X'$ and $B\cap X'$ are $\Gamma$-equidecomposable, then
  $A\cap X'$ and $B\cap X'$ satisfy the $k$-Hall condition.

  (1)$\Rightarrow$(2). Without loss of generality assume
  that the $k$-Hall condition and $\Gamma$-uniformity is
  satisfied everywhere
 Let
  $\Gamma=\Z^d\times \Delta$ where $\Delta$ is a finite
  group and $d\geq 0$.

  If $d=0$, then the group $\Gamma$ is finite and the action
  has finite orbits (the discrepancy condition trivializes
  and we do not need to use it).  On each orbit the Hall
  condition is satisfied, so on each orbit there exists a
  bijection between $A$ and $B$ on that orbit. Thus, the
  sets $A$ and $B$ are $\Delta$-equidecomposable using a
  Borel choice of bijections on each orbit separately.

  Thus, we can assume for the rest of the proof that
  $d\geq 1$. Since $\Delta$ is finite, we can
  quotient by its action and get a standard Borel space
  $X'=X\slash \Delta$ with the probability measure induced by
  the quotient map $\pi:X\to X'$. We then have a free pmp
  action of $\Z^d\curvearrowright X'$. Consider the function
  $f:X'\to \Z$ defined by
  $$f(x')=|A\cap \pi^{-1}(\{x'\})|-|B\cap
  \pi^{-1}(\{x'\})|.$$
  Note that $f$ is bounded by $|\Delta|$. Using Proposition
  \ref{real} and Proposition \ref{integerflow} we get an invariant
  subset $Y'\subseteq X'$ of measure $1$ and an integer-valued
  measurable $f$-flow $\psi$ on the edges of the Schreier
  graph $G$ of $\Z^d\curvearrowright Y'$ on $Y'$ such that
  $|\psi|\leq |\Delta|\,d^k+12d$. Again, without loss of
  generality, we can assume $Y'=X'$ by replacing $X$ with
  $Y=\pi^{-1}(Y')$, if needed.

  Note that there exists a constant $r$, depending only on $d$ such that for every
  tiling of $\Z^d$ with cubes with sides $n$ or $n+1$, every
  cube is adjacent to at most $r$ many other cubes in the tiling.

  Note that $\Gamma$-uniformity implies that for every set
  $D$ such that $D=D'\times\Delta$ where $D'$ is a cube
  with sides $n$ or $n+1$ we have $|A\cap D|,|B\cap
  D|\geq  c n^d|\Delta|$. Let $n$ be such
  that 
  $c n^d|\Delta|
\geq r
  (n+1)^{d-1} (|\Delta|\,d^k+12d)$.

  Using the Gao--Jackson theorem \cite{gao.jackson}, find a
  Borel tiling $T'$ of $X'$ with cubes of sides $n$ or
  $n+1$. Pulling back the tiling to $X$ via $\pi$, we get a
  Borel tiling $T$ of $X$ with cubes of the form
  $D=(D'\times \Delta)\cdot x$ where $D'$ has sides of length $n$ or
  $n+1$. 
  Note that for every tile $D$ in $T$ we have
  \[|A\cap D|, |B\cap D|\geq r
  (n+1)^{d-1} (|\Delta|\,d^k+12d).\label{unif}\tag{$*$}\]
  Let $H$ be the graph on $T$ where two cubes are connected
  with an edge if they are adjacent and similarly let $H'$
  be the graph on $T'$ with two cubes connected with an edge
  if they are adjacent. We have two functions $F':T'\to\Z$
  defined as $F'(C)=\sum_{x'\in C}f(x')$ and $F:T\to\Z$ defined as
  $$F(C)=|A\cap C|-|B\cap C|.$$ 

Define an $F'$-flow $\Psi'$ on $H'$ as
$\Psi'(C,D)=\sum_{(x_1',x_2')\in G, x_1'\in C, x_2'\in
  D}\psi(x_1',x_2')$ and let $\Psi$ be an $F$-flow on $H$
obtained by pulling back $\Psi'$ via $\pi$.
 Note that any adjacent cubes in $T'$ are connected by at
 most $(n+1)^{d-1}$ edges, so both $\Psi$ and $\Psi'$ are
 bounded by $|\Psi|,|\Psi'|\leq (n+1)^{d-1}(|\Delta|\,d^k+ 12d)$. 

  Note that each vertex in $H'$ has degree at most 
$r$ and the same is true in $H$.

Thus, by (\ref{unif}), for each
  $C\in T$ and $D\in T$ which are connected with an edge in
  $H$, we can find pairwise disjoint sets
  $A(C,D),B(C,D)\subseteq C$ of size at least
  $(n+1)^{d-1}(|\Delta|\,d^k+ 12d)$ such that
  $A(C,D)\subseteq A\cap C$, $B(C,D)\subseteq B\cap C$.

  Now, the function which witnesses the equidecomposition is
  defined in two steps. First, we can find a map $g \colon \dom(g) \to B$ such that $\dom(g)\subset \bigcup_{(C,D)\in H} A(C,D)$, for any two neighbouring cubes $C,D\in T$ satisfying 
  $\Psi(C,D)>0$ we have $|\dom(g) \cap A(C,D)| = \Psi(C,D)$ and the points in $\dom(g) \cap A(C,D)$ are mapped injectively to $B(C,D)$. Note that for any cube $C\in T$ the set $C \cap (A\setminus \dom(g))$ contains as many points as the set $C \cap (B \setminus \im(g))$. Hence, one can extend $g$ to a funcion $g' \colon A \to B$ in such a way that its restriction to $C \cap (A\setminus \dom(g))$ is a bijection onto $C \cap (B \setminus \im(g))$ for any cube $C\in T$. Since $\psi$ and
  hence $\Psi'$ and $\Psi$ are measurable, the function $g'$ can be chosen to be measurable and it moves points
  by at most $2(|\Delta|+(n+1)^d)$ in the Schreier graph distance. Thus,
  $g'$ witnesses that $A$ and $B$ are
  equidecomposable using measurable pieces.
\end{proof}

\section{Measurable circle squaring}
\label{sec:meas-circle-squar}

In this section we comment on how Corollary \ref{wniosek} follows from
Theorem \ref{hall}. We use an argument which appears in a preprint
of Grabowski, M\'ath\'e and Pikhurko \cite{gmp.preprint} and
provide a short proof for completeness.

\begin{lemma}\label{pierwszy}
  Suppose $\Gamma \curvearrowright (X,\mu)$ is a free pmp
  action of a countable group $\Gamma$. If $A,B\subseteq X$
  are $\Gamma$-equidecomposable and $X'\subseteq X$ is
  $\Gamma$-invariant, then $A\cap X'$ and $B\cap X'$ are
  also equidecomposable. If $X'$ is additionally
  $\mu$-measurable and $A$ and $B$ are
  $\Gamma$-equidecomposable using $\mu$-measurable pieces,
  then $A\cap X'$ and $B\cap X'$ are
  $\Gamma$-equidecomposable using $\mu$-measurable pieces.
\end{lemma}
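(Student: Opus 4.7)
The plan is to simply restrict the pieces of an existing equidecomposition to the invariant set $X'$ and verify that the result is still an equidecomposition. Concretely, I start from a witness of the $\Gamma$-equidecomposability of $A$ and $B$: disjoint pieces $A_1,\ldots,A_n \subseteq A$ and $B_1,\ldots,B_n \subseteq B$ with $A = \bigsqcup_i A_i$, $B = \bigsqcup_i B_i$, and group elements $\gamma_1,\ldots,\gamma_n \in \Gamma$ with $\gamma_i A_i = B_i$. I then define $A'_i = A_i \cap X'$ and $B'_i = B_i \cap X'$, and claim that the collection $(A'_i, B'_i, \gamma_i)$ witnesses equidecomposability of $A \cap X'$ and $B \cap X'$.

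The main point to verify is the compatibility $\gamma_i A'_i = B'_i$. This is exactly where the $\Gamma$-invariance of $X'$ is used: since $\gamma_i X' = X'$, we compute $\gamma_i(A_i \cap X') = \gamma_i A_i \cap \gamma_i X' = B_i \cap X' = B'_i$. The fact that $\{A'_i\}$ partitions $A \cap X'$ and $\{B'_i\}$ partitions $B \cap X'$ is immediate from the disjointness and union properties of the original partitions, intersected with $X'$.

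For the second assertion, I observe that if the original pieces $A_i$ and $B_i$ are $\mu$-measurable and $X'$ is $\mu$-measurable, then the restricted pieces $A'_i = A_i \cap X'$ and $B'_i = B_i \cap X'$ are $\mu$-measurable as intersections of $\mu$-measurable sets, so the same collection witnesses equidecomposability using $\mu$-measurable pieces. There is really no hard step: this lemma is a bookkeeping observation that $\Gamma$-invariance of $X'$ lets one freely pass to the restriction, and it serves only to reduce Corollary \ref{wniosek} to the setting where Theorem \ref{hall} can be applied on a full-measure invariant subset.
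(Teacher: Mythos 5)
Your proposal is correct and follows exactly the same route as the paper: restrict each piece $A_i$, $B_i$ to $X'$ and use $\Gamma$-invariance of $X'$ to see that $\gamma_i(A_i\cap X')=B_i\cap X'$, with measurability of the restricted pieces being immediate in the second case. No issues.
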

\begin{proof}
  The proof is the same in both cases. Let $A_1,\ldots,A_n$
  and $B_1,\ldots,B_n$ be partitions of $A$ and $B$ such
  that $\gamma_i A_i=B_i$ for some $\gamma_i\in \Gamma$. Put
  $A_i'=A_i\cap X'$ and $B_i'=B_i\cap X'$. Then
  $\gamma_i A_i'=B_i'$, so $A'_i$ and $B_i'$ witness that
  $A\cap X'$ and $B\cap X'$ are equidecomposable.
\end{proof}

\begin{lemma}\label{drugi}
  Let $\mu$ be a probability measure on $X$ and
  $\Gamma \curvearrowright X$ be a Borel pmp action of a countable group $\Gamma$. Suppose $A,B\subseteq X$ are
  $\Gamma$-equidecomposable and there exists a
  measurable set $Y\subseteq X$ of
  measure $1$ such that
  $A\cap Y,B\cap Y$ are equidecomposable using $\mu$-measurable
  piecces. Then $A,B$ are equidecomposable using $\mu$-measurable
  pieces.
\end{lemma}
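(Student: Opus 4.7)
My approach is to patch the given $\mu$-measurable equidecomposition on a full-measure subset with a trivially $\mu$-measurable equidecomposition on the complementary null set, which exists because the hypothesis already gives us a (not necessarily measurable) $\Gamma$-equidecomposition of $A$ and $B$.

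First I would replace $Y$ by a $\Gamma$-invariant full-measure subset. Since $\Gamma$ is countable and acts in a measure-preserving way, the set
$$Y' = \bigcap_{\gamma \in \Gamma} \gamma Y$$
is measurable, $\Gamma$-invariant, and satisfies $\mu(Y') = 1$. Applying Lemma \ref{pierwszy} with $X' = Y'$ to the hypothesized $\mu$-measurable equidecomposition of $A \cap Y$ and $B \cap Y$, I obtain that $A \cap Y'$ and $B \cap Y'$ are $\Gamma$-equidecomposable using $\mu$-measurable pieces.

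Next, set $Z = X \setminus Y'$, which is $\Gamma$-invariant and $\mu$-null. Applying Lemma \ref{pierwszy} with $X' = Z$ to the given $\Gamma$-equidecomposition of $A$ and $B$, the sets $A \cap Z$ and $B \cap Z$ are $\Gamma$-equidecomposable. Because $\mu(Z) = 0$, every subset of $Z$ is $\mu$-measurable with respect to the completion of $\mu$, so the pieces of this equidecomposition are automatically $\mu$-measurable. I would then combine the two decompositions: if $\{A_i'\}$, $\{B_i'\}$ with $\gamma_i A_i' = B_i'$ witness the equidecomposition on $Y'$ and $\{A_j''\}$, $\{B_j''\}$ with $\delta_j A_j'' = B_j''$ witness the one on $Z$, then their disjoint union (grouped by the same group element, if desired) yields $\mu$-measurable partitions of $A$ and $B$ realizing the required equidecomposition.

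There is no real obstacle here: the whole argument is a bookkeeping exercise built on Lemma \ref{pierwszy}. The only subtle point worth highlighting is the convention that ``$\mu$-measurable'' refers to the completion of $\mu$, which is precisely what allows the non-measurable equidecomposition on the null set $Z$ to be treated as measurable when we glue it to the given measurable equidecomposition on $Y'$.
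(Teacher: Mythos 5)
Your proposal is correct and follows essentially the same route as the paper: pass to the $\Gamma$-invariant full-measure core $\bigcap_{\gamma}\gamma Y$, apply Lemma \ref{pierwszy} there to the measurable equidecomposition and on the invariant null complement to the arbitrary one, and observe that null pieces are automatically $\mu$-measurable. (Your version even fixes an apparent typo in the paper, which writes $\bigcap_{\gamma\in\Gamma}\gamma X$ where $\bigcap_{\gamma\in\Gamma}\gamma Y$ is intended.)
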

\begin{proof}
  Write $X'=\bigcap_{\gamma\in \Gamma}\gamma X$. Note that $\mu(X')=1$ and
  $\gamma X'=X'$ for all $\gamma\in \Gamma$.
  By Lemma \ref{pierwszy}, $A'=A\cap X'$ and $B'=B\cap X'$ are
  $\Gamma$-equidecomposable using $\mu$-measurable pieces. Write $X''=
  X\setminus X'$ and note that $\gamma X''=X''$ for all $\gamma\in
  \Gamma$. By the previous lemma again, $A''=A\cap X''$ and
  $B''=B\cap X''$ are $\Gamma$-equidecomposable. However, all pieces in
  the latter decomposition all $\mu$-null, hence $\mu$-measurable. This shows
  that $A=A'\cup A''$ and $B=B'\cup B''$ are
  $\Gamma$-equidecomposable using $\mu$-measurable pieces.

\end{proof}

Finally, we give a proof of Corollary \ref{wniosek}.

\begin{proof}[Proof of Corollary \ref{wniosek}]
  Suppose $\Gamma\curvearrowright (X,\mu)$ is a free pmp
  action of a finitely generated abelian
  group $\Gamma$ and $A$ and $B$ are two measurable
  $\Gamma$-uniform sets which are
  $\Gamma$-equidecomposable. Note that since $\Gamma$ is
  amenable, $A$ and $B$ must have the same measure (see
  \cite[Corollary 10.9]{wagon}). Let
  $\gamma_1,\ldots,\gamma_n$ be the elements of
  $\Gamma$ used in the equidecomposition and let $k$ be bigger
  than the lengths of $\gamma_i$. Then $A$ and $B$ satisfy
  the $k$-Hall condition. In particular, $A$ and $B$ satisfy
  the $k$-Hall condition $\mu$-a.e., so by Theorem \ref{hall} there
  is a $\Gamma$-invariant measurable set $X'\subseteq X$ of measure $1$ such
  that $A\cap X'$ and $B\cap X'$ are
  $\Gamma$-equidecomposable using $\mu$-measurable
  pieces. By Lemma \ref{drugi}, $A$ and $B$ are
  $\Gamma$-equidecomposable using $\mu$-measurable pieces as well.
\end{proof}

\bibliographystyle{plain}
\bibliography{refs}

\end{document}